\newcommand{\subgrp}[1]{\langle #1 \rangle}
\newcommand{\set}[1]{\left\{ #1 \right\}}
\newcommand{\abs}[1]{\left| #1 \right|}
\newcommand{\wt}[1]{\widetilde{ #1}}
\newcommand{\ol}[1]{\overline{#1}}
\newcommand{\wh}[1]{\widehat{ #1}}
\newcommand{\sslash}{/\!\!/}
\newcommand{\ev}{\textup{ev}}
\newcommand{\odd}{\textup{odd}}
\DeclareMathOperator{\ad}{ad}
\DeclareMathOperator{\chr}{char}
\DeclareMathOperator{\Ext}{Ext}
\DeclareMathOperator{\gldim}{gldim}
\DeclareMathOperator{\gr}{gr}
\DeclareMathOperator{\opH}{H}
\newcommand{\Hbul}{\opH^\bullet}
\DeclareMathOperator{\Hom}{Hom}
\DeclareMathOperator{\id}{id}
\DeclareMathOperator{\im}{im}
\DeclareMathOperator{\injdim}{injdim}
\DeclareMathOperator{\Max}{Max}
\DeclareMathOperator{\Nil}{Nil}
\DeclareMathOperator{\projdim}{projdim}
\DeclareMathOperator{\res}{res}
\DeclareMathOperator{\Span}{span}
\newcommand{\C}{\mathbb{C}}
\newcommand{\M}{\mathbb{M}}
\newcommand{\N}{\mathbb{N}}
\renewcommand{\P}{\mathbb{P}}
\newcommand{\Z}{\mathbb{Z}}
\newcommand{\Pone}{\P_1}
\newcommand{\Mones}{\M_{1;s}}
\newcommand{\g}{\mathfrak{g}}
\newcommand{\fm}{\mathfrak{m}}
\newcommand{\fp}{\mathfrak{p}}
\newcommand{\wtg}{\wt{\g}}
\newcommand{\wtM}{\wt{M}}
\newcommand{\wtN}{\wt{N}}
\newcommand{\Chi}{\mathcal{X}}
\newcommand{\calO}{\mathcal{O}}
\newcommand{\zero}{\ol{0}}
\newcommand{\one}{\ol{1}}
\newcommand{\Pbar}{\ol{P}}
\newcommand{\Ybar}{\ol{Y}}
\newcommand{\gone}{\g_{\one}}
\newcommand{\gzero}{\g_{\zero}}
\newcommand{\Vone}{V_{\one}}
\newcommand{\Vzero}{V_{\zero}}
\newcommand{\bfHom}{\mathbf{Hom}}
\newcommand{\GLmn}{GL_{m|n}}
\newcommand{\GLmnone}{GL_{m|n(1)}}
\newcommand{\gl}{\mathfrak{gl}}
\newcommand{\glmn}{\gl(m|n)}
\newcommand{\glone}{\gl(m|n)_{\one}}
\newcommand{\glzero}{\gl(m|n)_{\zero}}
\newcommand{\glmm}{\gl(m|m)}
\newcommand{\kmm}{k^{m|m}}
\numberwithin{equation}{subsection}
\newtheorem{theorem}{Theorem}[subsection]
\newtheorem*{theorem*}{Theorem}
\newtheorem{proposition}[theorem]{Proposition}
\newtheorem{corollary}[theorem]{Corollary}
\newtheorem{lemma}[theorem]{Lemma}
\theoremstyle{definition}
\newtheorem{definition}[theorem]{Definition}
\newtheorem*{definition*}{Definition}
\newtheorem{remark}[theorem]{Remark}
\title[Support varieties for modular Lie superalgebras]{Support varieties and modules of finite projective dimension for modular Lie superalgebras\\[1ex] \footnotesize\mdseries
  with an appendix on homological dimensions over Noether Algebras \\ by Luchezar L.\ Avramov and Srikanth B.\ Iyengar}
\author{Christopher M.\ Drupieski}
\address{Department of Mathematical Sciences,
		DePaul University,
		Chicago, IL 60614, USA}
\email{c.drupieski@depaul.edu}
\author{Jonathan R. Kujawa}
\address{Department of Mathematics \\
		University of Oklahoma \\
		Norman, OK 73019, USA}
\email{kujawa@math.ou.edu}
\thanks{The first author was supported in part by a Simons Collaboration Grant for Mathematicians, and by a Faculty Summer Research Grant from the DePaul University College of Science and Health in Summer 2019. The second author was supported in part by a Simons Collaboration Grant for Mathematicians.}
\subjclass[2010]{Primary 20G10. Secondary 17B56.}
\begin{document}

\begin{abstract}
We investigate cohomological support varieties for finite-dimensional Lie superalgebras defined over fields of odd characteristic. Verifying a conjecture from our previous work, we show the support variety of a finite-dimensional supermodule can be realized as an explicit subset of the odd nullcone of the underlying Lie superalgebra. We also show the support variety of a finite-dimensional supermodule is zero if and only if the supermodule is of finite projective dimension. As a consequence, we obtain a positive characteristic version of a theorem of B{\o}gvad, showing that if a finite-dimensional Lie superalgebra over a field of odd characteristic is absolutely torsion free, then its enveloping algebra is of finite global dimension.
\end{abstract}

\maketitle


\section{Introduction}

\subsection{Overview}

For more than three decades, the theory of cohomological support varieties has played an important role in non-semisimple representation theory. The foundations of the subject go back to the work of Quillen on the spectrum of the cohomology ring of a finite group \cite{Quillen:1971}, with the first fully-formed examples of the theory appearing a decade later through the work of Carlson and others in the context of finite groups \cite{Carlson:1983}, and through the work of Friedlander and Parshall in the context of finite-dimensional restricted Lie algebras \cite{Friedlander:1986b}. In these prototypical situations, the cohomology ring $\Hbul(G,k)$ of a group or restricted Lie algebra defines, via its maximal ideal spectrum, an affine algebraic variety $\abs{G}$. Then for each finite-dimensional $G$-module $M$, the support of the $\Hbul(G,k)$-module $\Ext_G^\bullet(M,M)$ defines a closed subvariety $\abs{G}_M$ of $\abs{G}$, called the \emph{support variety} of $M$. In these prototypical cases, one can describe $\abs{G}_M$ in terms of local representation-theoretic data (`rank varieties'), and it is thus possible to play the geometric structure of $\abs{G}_M$ and the representation-theoretic structure of $M$ off of each other, revealing new insights into both.

Betting on these successes it is perhaps natural to ask whether the technology of support varieties can be fruitfully applied to arbitrary Lie algebras. But here the theory stumbles out of the gate: the cohomology ring of a finite-dimensional Lie algebra is a finite-dimensional graded algebra, and hence its spectrum is trivial. In contrast, finite-dimensional Lie \emph{super}algebras do admit interesting support variety theories. For example, Boe, Nakano, and the second author introduced support varieties for classical Lie super\-algebras over the field of complex numbers, and showed these varieties encode representation-theoretic information, including the atypicality and complexity of modules, and the thick tensor ideals of the module category \cite{Boe:2009, Boe:2010, Boe:2012, Boe:2017}. Their varieties were based not on the `ordinary' cohomology ring $\Hbul(\g,\C)$, but on the relative cohomology ring $\Hbul(\g,\gzero;\C)$. In independent work, Duflo and Serganova \cite{Duflo:2005} defined non-cohomological associated varieties for Lie superalgebras in characteristic zero and showed that they also encode representation-theoretic data. In both of these approaches, the characteristic zero hypothesis was essential.

This paper is a continuation of our work investigating the representation theory and cohomology of Lie super\-algebras in positive characteristic. Let $k$ be an algebraically closed field of odd characteristic, and let $\g = \gzero \oplus \gone$ be a finite-dimensional Lie superalgebra over $k$. In our previous work \cite{Drupieski:2019a}, we showed that the spectrum of $\Hbul(\g,k)$ is homeomorphic to the \emph{odd nullcone} of $\g$,
	\[
	\Chi_\g(k) = \set{x \in \gone : [x,x] = 0};
	\]
see Theorem \ref{thm:homeomorphism}.\footnote{There is an extra Frobenius twist involved, which we ignore for the purposes of the introduction.} We also defined the rank variety of a supermodule: Given a $\g$-supermodule $M$ and a nonzero element $x \in \Chi_\g(k)$, let $M|_{\subgrp{x}}$ denote the restriction of $M$ to the one-dimensional purely odd Lie sub-superalgebra of $\g$ generated by $x$. Since $[x,x] = 0$, the enveloping superalgebra $U(\subgrp{x})$ is then isomorphic to an exterior algebra on one generator. We say that $M|_{\subgrp{x}}$ is \emph{free} if $M$ is free as a $U(\subgrp{x}) \cong \Lambda(x)$-supermodule. The \emph{rank variety} for $M$ is then defined to be the set
	\[
	\Chi_{\g}'(M) = \set{x \in \Chi_\g(k) : M|_{\subgrp{x}} \text{ is not free}} \cup \set{0}.
	\]
We observed that the rank variety is always a subset of the cohomological support variety $\Chi_\g(M)$, and based on partial results and examples we conjectured that the two always coincide. Having an explicit, non-cohomological description of the support variety is invaluable for both calculational and theoretical purposes. For example, taking the equality $\Chi_\g'(M) = \Chi_\g(M)$ for granted, one can deduce the \emph{tensor product property}, namely, for any finite-dimensional $\g$-supermodules $M$ and $N$,
	\[
	\Chi_{\g}(M \otimes N) = \Chi_{\g}(M) \cap \Chi_{\g}(N).
	\]

\subsection{Main results}

The first main result of this paper is the verification of the conjecture from the previous paragraph. Given a finite-dimensional $\g$-supermodule $M$, we show in Corollary~\ref{cor:rank=support} that the cohomological support variety of $M$ is equal to the rank variety of $M$:
	\[
	\Chi_{\g}'(M) = \Chi_{\g}(M).
	\]
Similar to the situation for restricted Lie algebras \cite{Friedlander:1986b}, one can reduce the proof of this equality to the case when $M$ is the natural supermodule for the general linear Lie superalgebra $\g = \glmn$. From there, however, the proof requires substantially different arguments than from the classical case. One important tool is a spectral sequence that relates the cohomology of $\g$ to the cohomology of an associated graded Lie superalgebra $\wtg$ arising from the Clifford filtration on $\g$; see Sections \ref{subsec:clifford} and \ref{subsec:cliffordspecseq} for details. While the original Lie superalgebra $\g$ may not be a restricted Lie superalgebra, the associated graded Lie superalgebra $\wtg$ is close to being abelian, and we can use $\wtg$ to define a related $p$-nilpotent restricted Lie superalgebra for which $\wtM$ (the associated graded supermodule arising from a choice of standard filtration on $M$) is a restricted supermodule. Equivalently, $\wtM$ becomes a supermodule for a certain height-one infinitesimal unipotent supergroup scheme. Then applying our previous work on support varieties for infinitesimal unipotent supergroup schemes \cite{Drupieski:2018}, we can place an upper bound on the size of $\Chi_\g(M)$, and hence show that $\Chi_\g'(M) = \Chi_\g(M)$.

Our second main result is a characterization of when the support variety of a supermodule is trivial. In Theorem~\ref{theorem:0varietyimpliesfinprojdim} we prove for a finite-dimensional $\g$-supermodule $M$ that
	\begin{equation} \label{eq:zerosupport}
	\Chi_{\g}(M) =  \set{0} \quad \text{if and only if} \quad \projdim_{U(\g)}(M) < \infty.
	\end{equation}
The proof of \eqref{eq:zerosupport} relies on some results of Avramov and Iyengar, which the authors have kindly included here in Appendix \ref{SS:homologicaldimension}. Using the equality $\Chi_\g'(M) = \Chi_\g(M)$, \eqref{eq:zerosupport} can be rephrased as:
	\[
	\projdim_{U(\g)}(M) < \infty \quad \text{if and only if} \quad \set{ x \in \gone : [x,x] = 0 \text{ and } M|_{\subgrp{x}} \text{ is not free}} = \emptyset.
	\]
Thus, the finitude of projective dimension can be detected in terms of local representation-theoretic data. In contrast to the situations for finite groups or restricted Lie algebras (or more generally, for finite-dimensional self-injective algebras), for Lie superalgebras it is possible for a supermodule to have a finite but nonzero projective dimension. For example, if $N$ is a $\gzero$-module, then the induced supermodule $U(\g) \otimes_{U(\gzero)} N$ has finite projective dimension but is rarely projective.

Using \eqref{eq:zerosupport}, we deduce in Corollary \ref{C:finiteglobaldimension} that a finite-dimensional Lie super\-algebra over a field odd characteristic has finite global dimension if and only if its odd nullcone is zero. This result was proved previously for finite-dimensional solvable (positively graded) Lie superalgebras by B{\o}gvad \cite{Bo-gvad:1984} and for arbitrary finite-dimensional Lie superalgebras in characteristic zero by Musson \cite{Musson:2012}. Musson also makes use of the Clifford filtration and Lie superalgebra cohomology, but our approach is different from his.

\subsection{Further questions}

The rank variety $\Chi_\g'(M)$, which is equal to the cohomological support variety $\Chi_\g(M)$ by Corollary~\ref{cor:rank=support}, is equivalent in definition to the associated variety defined by Duflo and Serganova for Lie superalgebras in characteristic zero. Moreover, the fact that $\Chi_\g(M)$ detects when a supermodule is of finite projective dimension parallels the projectivity detection result of \cite[Theorem 3.11]{Duflo:2005}. These facts are quite striking given that, to our knowledge, there is no known cohomological realization for Duflo and Serganova's associated varieties in characteristic zero. The similarities between the two theories warrant further investigation. It would also be interesting to use the rank variety description to compute support varieties for interesting families of supermodules such as Kac supermodules, simple supermodules, and so on.

Another standard question in support variety theory is that of \emph{realization}, namely, given a closed conical subvariety $W$ of the cohomological spectrum $\Chi_\g(k)$, is there a finite-dimensional $\g$-super\-module $M$ such that $W = \Chi_\g(M)$? Since the projectives for the enveloping algebra $U(\g)$ are infinite-dimensional, one cannot simply imitate the standard argument involving Carlson's $L_\zeta$-modules (e.g., as presented in \cite[\S2]{Feldvoss:2010}). But if one is willing to consider finitely-generated $U(\g)$-supermodules, then, for example, we expect the approach of Avramov and Iyengar \cite{Avramov:2007} would adapt to this setting.


\subsection{Acknowledgements} \label{subsection:acknowledgments}

The authors thank Luchezar Avramov and Srikanth Iyengar for their generosity in sharing their preprint \cite{Avramov:2014} and for contributing Appendix \ref{SS:homologicaldimension}.

\subsection{Conventions} \label{subsection:conventions}

We generally follow the conventions of our previous work \cite{Drupieski:2019a,Drupieski:2019b}, to which we refer the reader for any unexplained terminology or notation. Throughout, $k$ will denote a field of characteristic $p \geq 3$ (assumed to be algebraically closed beginning in Section \ref{section:supportvarieties}), all vector spaces will be $k$-vector spaces, all algebras will be $k$-algebras, and all unadorned tensor products will be tensor products over $k$. Given a $k$-vector space $V$, let $V^* = \Hom_k(V,k)$ be its $k$-linear dual, and let $V^{(1)} = V \otimes_{\varphi} k$ be its Frobenius twist, i.e., the $k$-vector space obtained via base change along the Frobenius morphism $\varphi: \lambda \mapsto \lambda^p$. Given $v \in V$, set $v^{(1)} = v \otimes_{\varphi} 1 \in V^{(1)}$. More generally, if $X$ is an affine $k$-scheme (resp.\ algebraic variety) with coordinate algebra $k[X]$, we write $X^{(1)}$ for the scheme (resp.\ variety) with coordinate algebra $k[X^{(1)}] = k[X]^{(1)}$. If $Y = X^{(1)}$, then we will find it convenient to write $X = Y^{(-1)}$.

Set $\Z_2 = \Z/2\Z = \set{\zero,\one}$. Following the literature, we use the prefix `super' to indicate that an object is $\Z_2$-graded. We frequently leave the prefix ``super'' implicit when the $\Z_{2}$-grading is clear from context.  We denote the decomposition of a vector superspace into its $\Z_2$-homogeneous components by $V = \Vzero \oplus \Vone$, calling $\Vzero$ and $\Vone$ the even and odd subspaces of $V$, respectively, and writing $\ol{v} \in \Z_2$ to denote the superdegree of a homogeneous element $v \in V$. Whenever we state a formula in which homogeneous degrees are specified, we mean that the formula is true as written for homogeneous elements and that it extends linearly to non-homogeneous elements. We use the symbol $\cong$ to denote even (i.e., degree-preserving) isomorphisms of superspaces, and use $\simeq$ for odd (i.e., degree-reversing) isomorphisms. In the context of algebraic varieties, we use $\cong$ to denote an isomorphism of varieties, and use $\simeq$ to denote a homeomorphism of topological spaces.

We use the adjective \emph{graded} to indicate that an object admits an additional $\Z$-grading that is compatible with its underlying structure. Thus a \emph{graded superspace} is a $(\Z \times \Z_2)$-graded vector space, a \emph{graded superalgebra} is a $(\Z \times \Z_2)$-graded algebra, etc. Given a graded superspace $V$ and a homogeneous element $v \in V$ of bidegree $(s,t) \in \Z \times \Z_2$, we write $\deg(v) = s$ and $\ol{v} = t$ for the $\Z$-degree and the $\Z_2$-degree of $v$, respectively. Then if $A$ is a graded superalgebra, we say that $A$ is \emph{graded commutative} provided that for all homogeneous elements $a,b \in A$, one has
	\begin{equation} \label{eq:gradedcommutative}
	ab = (-1)^{\deg(a) \cdot \deg(b) + \ol{a} \cdot \ol{b}}ba.
	\end{equation}
More generally, we use the sign convention indicated in \eqref{eq:gradedcommutative} whenever homogeneous bigraded symbols pass each other. So for example, if $A$ and $B$ are graded superalgebras, then $A \otimes B$ is also a graded superalgebra, with product defined by $(a \otimes b) (c \otimes d) = (-1)^{\deg(b) \cdot \deg(c) + \ol{b}\cdot \ol{c}} ac \otimes bd$.

Let $\N = \set{0,1,2,3,\ldots}$ be the set of non-negative integers.

\section{Preliminaries} \label{section:preliminaries}

We begin in Section \ref{subsec:LSAcohomology} by recalling some basic facts concerning the calculation of Lie super\-algebra cohomology. In Sections \ref{subsec:clifford} and \ref{subsec:cliffordspecseq} we recall the definition of the Clifford filtration on a Lie superalgebra, and establish some preliminary results concerning a spectral sequence that arises from the Clifford filtration. Then in Section \ref{subsec:homdimenveloping} we collect some results that will allow us to apply Theorem \ref{thm:equivprojdimfinite} to the enveloping superalgebra of a finite-dimensional Lie superalgebra over $k$.

Throughout this section, let $\g = \gzero \oplus \gone$ be a finite-dimensional Lie superalgebra over $k$.

\subsection{Lie superalgebra cohomology} \label{subsec:LSAcohomology}

Let $Y(\g) = U(\g) \# \Ybar(\g)$ be the Koszul resolution of $\g$ as described in \cite[\S3.1]{Drupieski:2013c}. As a graded super\-algebra, it is a smash product of the enveloping algebra $U(\g)$, considered as a graded super\-algebra concentrated in $\Z$-degree $0$, and the graded super\-algebra $\Ybar(\g) = \Lambda(\gzero) \otimes \Gamma(\gone)$. Here $\Gamma(\gone)$ denotes the ordinary divided power algebra on the $k$-vector space $\gone$, and the component of $\Z$-degree $n$ in $\Ybar(\g)$ is given by $\Ybar_n(\g) = \bigoplus_{i+j=n} \Lambda^i(\gzero) \otimes \Gamma^j(\gone)$. The coproducts on $U(\g)$, $\Lambda(\gzero)$, and $\Gamma(\gone)$ induce on $Y(\g)$ the structure of a graded superbialgebra. Then the differential on $Y(\g)$ makes $Y(\g)$ into a differential graded superbialgebra and into a $U(\g)$-free resolution of the trivial module $k$. Denoting a monomial in $Y(\g) = U(\g) \# \left( \Lambda(\gzero) \otimes \Gamma(\gone) \right)$ by $u \subgrp{x_{i_1} \ldots x_{i_b}} \gamma_{a_1}(y_1) \cdots \gamma_{a_t}(y_t)$ as in \cite{Drupieski:2013c}, the differential $d: Y(\g) \rightarrow Y(\g)$ is defined on algebra generators by the formulas
	\begin{align*}
	d(u) &= 0, \\
	d(\subgrp{x}) &= x, \quad \text{and} \\
	d(\gamma_r(y)) &= y \gamma_{r-1}(y) - \tfrac{1}{2} \subgrp{[y,y]} \gamma_{r-2}(y).
	\end{align*}
An explicit formula for the map $d: Y_n(\g) \rightarrow Y_{n-1}(\g)$ is given in \cite[Remark 3.1.4]{Drupieski:2013c}.

For any pair of $\g$-modules $M$ and $N$, one gets that $Y(\g) \otimes M$ is a $U(\g)$-projective resolution of $M$, and the cohomology group $\Ext_\g^\bullet(M,N)$ can be computed as the cohomology of
	\[
	C^\bullet(\g,M,N) \colonequals \Hom_{U(\g)}(Y_\bullet(\g) \otimes M,N) \cong \Hom_k(\Ybar_\bullet(\g) \otimes M,N).
	\]
In particular, the cohomology ring $\Hbul(\g,k) = \Ext_{\g}^\bullet(k,k)$ can be computed as the cohomology of the cochain complex $C^\bullet(\g,k) \colonequals \Hom_{U(\g)}(Y_\bullet(\g),k) \cong \Hom_k(\Ybar_\bullet(\g),k)$.

The coalgebra structure of $Y(\g)$ induces an algebra structure on $C^\bullet(\g,k)$, and $C^\bullet(\g,k)$ is then isomorphic as a graded superalgebra to $\Lambda_s(\g^*)$, the superexterior algebra on $\g^*$ \cite[Lemma 3.2.1]{Drupieski:2013c}. As a graded super\-algebra, $\Lambda_s(\g^*) = \Lambda(\gzero^*) \otimes S(\gone^*)$, with $\g^* = \gzero^* \oplus \gone^*$ concentrated in $\Z$-degree $1$. The differential $\partial$ on $C^\bullet(\g,k)$ makes $\Lambda_s(\g^*)$ into a differential graded superalgebra, and the map $\partial: \g^* = \Lambda_s^1(\g^*) \rightarrow \Lambda_s^2(\g^*) \cong [\Lambda_s^2(\g)]^*$ then identifies with the transpose of the Lie bracket. In particular, the product on $\Lambda_s(\g^*)$ descends to the cup product in the cohomology ring $\Hbul(\g,k)$. More generally, let $\Delta: Y(\g) \rightarrow Y(\g) \otimes Y(\g)$ be the coproduct on $Y(\g)$, which is a map of chain complexes, and let $\Delta_{i,j}: Y_{i+j}(\g) \rightarrow Y_i(\g) \otimes Y_j(\g)$ be the evident component. Then given $f \in C^i(\g,k)$ and $g \in C^j(\g,M,N)$, define $f \odot g \in C^{i+j}(\g,M,N)$ by
	\begin{equation} \label{eq:fodotg}
	f \odot g \colonequals (f \otimes g) \circ (\Delta_{i,j} \otimes \id_M) : Y_{i+j}(\g) \otimes M \rightarrow k \otimes N = N.
	\end{equation}
If $f$ and $g$ are cocycles representing classes $\alpha \in \Ext_{\g}^i(k,k)$ and $\beta \in \Ext_{\g}^j(M,N)$, respectively, then $f \odot g$ is a cocycle representative for the cup product $\alpha \cup \beta \in \Ext_{\g}^{i+j}(M,N)$.

Write $\gone^*[p]$ for $\gone^*$ considered as a graded superspace concentrated in $\Z$-degree $p$. The map $z^{(1)} \mapsto z^p$ extends to an algebra map $S(\gone^*[p])^{(1)} \rightarrow S(\gone^*)$, and composing with the inclusion $S(\gone^*) \hookrightarrow \Lambda_s(\g^*)$, this produces an injective homomorphism of graded superalgebras
	\begin{equation} \label{eq:whvarphi}
	\wh{\varphi} = \wh{\varphi}_\g: S(\gone^*[p])^{(1)} \rightarrow \Lambda_s(\g^*).
	\end{equation}
Since $\Lambda_s(\g^*)$ is a graded-commutative superalgebra, and since the differential $\partial$ on $\Lambda_s(\g^*)$ acts by derivations, it follows that the image of $\wh{\varphi}$ consists of cocycles, and hence that $\wh{\varphi}$ induces a graded superalgebra homomorphism
	\begin{equation} \label{eq:varphi}
	\varphi = \varphi_\g: S(\gone^*[p])^{(1)} \rightarrow \Hbul(\g,k).
	\end{equation}
By \cite[Theorem 3.2.4]{Drupieski:2013c}, $\Hbul(\g,k)$ is finite over the image of $\varphi$, and if $M$ and $N$ are finite-dimensional $\g$-modules, then $\Ext_{\g}^\bullet(M,N) \cong \Ext_{\g}^\bullet(k,\Hom_k(M,N))$ is a finite $\Hbul(\g,k)$-module.

\subsection{Clifford filtration} \label{subsec:clifford}

The Clifford filtration on $\g$ is the increasing Lie superalgebra filtration $0 = F^0 \g \subseteq F^1 \g \subseteq F^2 \g = \g$ defined by $F^1 \g = \gone$. The associated graded Lie superalgebra,
	\[
	\wtg \colonequals \gr(\g) = \left( F^2 \g / F^1 \g \right) \oplus \left( F^1\g / F^0 \g \right) = \wtg_2 \oplus \wtg_1,
	\]
identifies with $\g$ as a vector superspace, with $\gone$ now in $\Z$-degree $1$ and $\gzero$ in $\Z$-degree $2$. Under this identification, the Lie bracket on $\wtg_1$ identifies with the original Lie bracket on $\gone$, and $\wtg_2$ is central in $\wtg$. The Clifford filtration on $\g$ induces an increasing nonnegative filtration $F^\bullet U(\g)$ on $U(\g)$ such that $F^0 U(\g) = k$; we call this the Clifford filtration on $U(\g)$. Then $\gr U(\g) = U(\wtg)$.

Let $N$ be a $\g$-module, and let $S \subseteq N$ be a $U(\g)$-module generating set for $N$. Then $N$ admits an increasing nonnegative filtration $F^\bullet N$, which we call the standard filtration associated to $S$, defined by
	\begin{equation} \label{eq:modulefiltration}
	F^i N = ( F^i U(\g)). S.
	\end{equation}
Thus for all $i,j \in \N$, one has $(F^i U(\g)).(F^j N) \subseteq F^{i+j} N$. We denote by $\wtN = \gr(N)$ the associated graded module of $N$. Evidently, if $S$ is a finite generating set for $N$, then $\wtN$ is a finitely-generated $\wtg$-module, generated by any basis for $\wtN_0$. 


\subsection{A spectral sequence} \label{subsec:cliffordspecseq}

In this section let $M$ be a finite-dimensional $\g$-module, and let $N$ be a finitely-generated $\g$-module. We consider $\g$ as filtered by the Clifford filtration, and we assume that $M$ and $N$ are equipped with standard filtrations $F^\bullet M$ and $F^\bullet N$ as in \eqref{eq:modulefiltration}, associated to some fixed choices of finite generating sets. Let $\wtg$ be the associated graded Lie superalgebra of $\g$, and let $\wtM$ and $\wtN$ be the associated graded $\wtg$-modules. In the special case of the trivial $\g$-module $k$, we have $F^0k = k$, so that $\wt{k}$ is concentrated in $\Z$-degree $0$.

The Clifford filtration on $\g$ induces an increasing filtration $F^\bullet Y(\g)$ on the Koszul complex $Y(\g)$, which is preserved by the differential in the sense that $d(F^i Y_n(\g)) \subseteq F^i Y_{n-1}(\g)$, and is compatible with the coproduct $\Delta: Y(\g) \rightarrow Y(\g) \otimes Y(\g)$ in the sense that $\Delta(F^\ell Y(\g)) \subseteq \sum_{i+j=\ell} F^i Y(\g) \otimes F^j Y(\g)$. The projective resolution $P_\bullet \colonequals Y_\bullet(\g) \otimes M$ of $M$ inherits a filtration defined by $F^\ell P_n = \sum_{i+j = \ell} F^i Y_n(\g) \otimes F^j M$, and the associated graded complex $\gr(P_\bullet)$ then identifies with the $U(\wtg)$-projective resolution $Y(\wtg) \otimes \wtM$ of $\wtM$. Evidently $\Pbar_\bullet \colonequals \Ybar_\bullet(\g) \otimes M$ inherits a filtration from $P_\bullet$ by restriction. Now define a decreasing filtration on $C^\bullet(\g,M,N)$ by
	\[
	F^i C^n(\g,M,N) = \set{ f \in C^n(\g,M,N) : f(F^j P_n) \subseteq F^{j-i}N \text{ for all $j \in \N$}}.
	\]
Given $x \in \g$, $u \in Y_n(\g)$, and $m \in M$, one has
	\[
	(x.u) \otimes m = x.(u \otimes m) - (-1)^{\ol{x} \cdot \ol{u}} u \otimes (x.m),
	\]
and from this it follows that the filtration on $C^n(\g,M,N)$ is also given by
	\[
	F^i C^n(\g,M,N) = \set{ f \in C^n(\g,M,N) : f(F^j \Pbar_n) \subseteq F^{j-i}N \text{ for all $j \in \N$}}.
	\]
If $f \in F^i C^m(\g,k)$ and $g \in F^j C^n(\g,M,N)$, then $f \odot g \in F^{i+j} C^{m+n}(\g,M,N)$, so the cup product of cochains makes $C^\bullet(\g,k)$ into a filtered differential graded algebra, and makes $C^\bullet(\g,M,N)$ into a filtered differential graded module over $C^\bullet(\g,k)$. For fixed $i$, one gets
	\begin{equation} \label{eq:quotientcomplex}
	F^i C^\bullet(\g,M,N) / F^{i+1} C^\bullet(\g,M,N) \cong C^\bullet(\wtg,\wtM,\wtN)_{-i} = \Hom_{U(\wtg)}(Y_\bullet(\wtg) \otimes \wtM, \wtN)_{-i},
	\end{equation}
where $\Hom_{U(\wtg)}(Y_n(\wtg) \otimes \wtM, \wtN)_{-i}$ denotes the set of $U(\wtg)$-module homomorphisms such that
	\[
	f([Y_n(\wtg) \otimes \wtM]_j) \subseteq \wtN_{j-i} \quad \text{ for all $j \in \N$}.
	\]


Let $n \in \N$ and let $f \in C^n(\g,M,N)$. Since $\Pbar_n$ and hence also $f(\Pbar_n)$ are finite-dimensional, it follows that there exist integers $s(n)$ and $t(f,n)$ such that $F^{s(n)} \Pbar_n = \Pbar_n$ and $f(\Pbar_n) \subseteq F^{t(f,n)} N$. Then $f \in F^{-t(f,n)} C^n(\g,M,N)$, since for all $j \in \N$ one gets
	\[
	f(F^j \Pbar_n) \subseteq f(\Pbar_n) \subseteq F^{t(f,n)}N \subseteq F^{j+t(f,n)}N.
	\]
Then $C^n(\g,M,N) = \bigcup_{i \in \Z} F^i C^n(\g,M,N)$, so the filtration on $C^n(\g,M,N)$ is exhaustive. (If there exists $t \in \N$ such that $F^t N = N$, e.g., if $N$ is finite-dimensional, then the filtration is not just exhaustive but is also bounded below.) On the other hand, if $f \in F^{s(n)+1}C^n(\g,M,N)$, then
	\[
	f(\Pbar_n) = f(F^{s(n)} \Pbar_n) \subseteq F^{s(n)-(s(n)+1)} N = F^{-1}N = 0,
	\]
so $f = 0$ and hence $F^{s(n)+1} C^n(\g,M,N) = 0$. Thus for each fixed $n$, the filtration on $C^n(\g,M,N)$ is bounded above. Then by \cite[Theorem 3.2]{McCleary:2001} and \eqref{eq:quotientcomplex}, there exists a spectral sequence
	\[
	E_1^{i,j}(M,N) = \Ext_{\wtg}^{i+j}(\wtM,\wtN)_{-i} \Rightarrow \Ext_{\g}^{i+j}(M,N), \tag*{$E(M,N):$}
	\]
where the subscript $-i$ denotes the component of internal $\Z$-degree $-i$. Since the filtrations on $C^\bullet(\g,k)$ and $C^\bullet(\g,M,N)$ are compatible with the cup products of cochains, it follows that $E(k,k)$ is a spectral sequence of algebras, and $E(M,N)$ is a spectral sequence of  over $E(k,k)$.

Observe that $\Ybar_n(\wtg) = \bigoplus_{s+t=n} \Lambda^s(\wtg_2) \otimes \Gamma^t(\wtg_1)$, and $\Lambda^s(\wtg_2) \otimes \Gamma^t(\wtg_1)$ is concentrated in $\Z$-degree $2s+t$. Then $\Ybar_n(\wtg)_i \neq 0$ only if $n \leq i \leq 2n$. Consequently, $E_1^{i,j}(k,k) \neq 0$ only if $i+j \geq 0$ and $-\frac{1}{2}i \leq j \leq 0$, so $E(k,k)$ is concentrated in the fourth quadrant. In particular, $E_\infty^{i,j}(k,k) = 0$ for $j > 0$. Since the filtration $F^\bullet \opH^n(\g,k)$ on $\opH^n(\g,k)$ arising from the $E_\infty$-page of $E(k,k)$ is bounded, this implies that $F^i \opH^n(\g,k) = \opH^n(\g,k)$ for $i \leq n$, and $E_\infty^{n,0}(k,k) = \opH^n(\g,k) / F^{n+1} \opH^n(\g,k)$. Then one gets an algebra homomorphism
	\begin{equation}
	\pi: \Hbul(\g,k) \rightarrow \Hbul(\wtg,k)_{-\bullet},
	\end{equation}
which is defined in cohomological degree $n$ by the composition
	\[
	\opH^n(\g,k) \twoheadrightarrow \opH^n(\g,k)/F^{n+1}\opH^n(\g,k) = E_\infty^{n,0} \hookrightarrow E_1^{n,0} = \opH^n(\wtg,k)_{-n},
	\]
where the unlabeled arrows are the canonical maps. Now $\pi$ fits in the commutative diagram
	\begin{equation} \label{eq:varphipi}
	\vcenter{\xymatrix{
	S(\gone^*[p])^{(1)} \ar@{->}[r]^{\varphi_\g} \ar@{->}[d]^{\cong} & \Hbul(\g,k) \ar@{->}[d]^{\pi} \\
	S(\wtg_1^*[p])^{(1)} \ar@{->}[r]^{\varphi_{\wtg}} & \Hbul(\wtg,k),
	}}
	\end{equation}
in which the left-hand vertical arrow is induced by the canonical identification $\gone \cong \wtg_1$. To check the commutativity of \eqref{eq:varphipi}, it suffices to check commutativity on the subspace $\gone^*[p]^{(1)}$, and this can be verified at the level of cochains by verifying the commutativity of the diagram
	\begin{equation} \label{eq:whvarphidiagram}
	\vcenter{\xymatrix{
	\gone^*[p]^{(1)} \ar@{->}[r]^{\wh{\varphi}_\g} \ar@{->}[d]^{\cong} & F^p C^p(\g,k)\ar@{->>}[r] & F^p C^p(\g,k) / F^{p+1} C^p(\g,k) \ar@{->}[d]^{\cong} \\
	\wtg_1^*[p]^{(1)} \ar@{->}[rr]^{\wh{\varphi}_{\wtg}} &  & C^p(\wtg,k)_{-p},
	}}
	\end{equation}
in which the right-hand vertical arrow is \eqref{eq:quotientcomplex}. The commutativity of \eqref{eq:varphipi} implies that the image of $\varphi_{\wtg}: S(\wtg_1^*[p])^{(1)} \rightarrow \Hbul(\wtg,k)$ consists of permanent cycles in the row $j = 0$ of $E_1(k,k)$.

In the next lemma we show that $\Ext_{\wtg}^\bullet(\wtM,\wtN)$ is finite under the cup product action of $\Hbul(\wtg,k)$. When $N$ is finite-dimensional this follows already from \cite[Theorem 3.2.4]{Drupieski:2013c} and the isomorphism $\Ext_{\wtg}^\bullet(\wtM,\wtN) \cong \Ext_{\wtg}^\bullet(k,\Hom_k(\wtM,\wtN))$, so the content of the lemma is in the case when $N$ is finitely-generated but not finite-dimensional.

\begin{lemma} \label{lemma:E1pagefinite}
Retain the notation and assumptions from the first paragraph of this section. Then $\Ext_{\wtg}^\bullet(\wtM,\wtN)$ is a finite module under the cup product action of $\Hbul(\wtg,k)$.
\end{lemma}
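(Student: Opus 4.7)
The strategy is to reduce the infinite-dimensional case to the already-known finite-dimensional case via a graded filtration of $V \colonequals \Hom_k(\wtM, \wtN)$. By the standard tensor-hom adjunction, $\Ext_{\wtg}^\bullet(\wtM, \wtN) \cong H^\bullet(\wtg, V)$. Since $\wtM$ is finite-dimensional and $\wtN$ is a finitely-generated $U(\wtg)$-module, $V$ is finitely generated as a $U(\wtg)$-module; moreover, $V$ inherits a $\Z$-grading from $\wtM^*$ and $\wtN$ with finite-dimensional graded pieces and a lower bound on its degrees (inherited from $\wtN$). Because $\wtg$ is concentrated in strictly positive internal degrees, each truncation $V_{\geq n} \colonequals \bigoplus_{j \geq n} V_j$ is a $U(\wtg)$-submodule of $V$ whose quotient $V/V_{\geq n}$ is finite-dimensional.

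The key technical input is a bidegree estimate on the Koszul cochain complex. Computing $H^\bullet(\wtg, V)$ via the Koszul resolution $Y(\wtg)$ of $k$ yields the bigraded complex $\Hom_k(\Ybar_\bullet(\wtg), V)$, whose pieces are indexed by cohomological degree and the internal degree inherited from $V$. Since $\Ybar_i(\wtg) = \bigoplus_{s+t=i} \Lambda^s(\wtg_2) \otimes \Gamma^t(\wtg_1)$ is concentrated in internal degrees $[i, 2i]$, the bigraded piece $\Hom_k(\Ybar_i(\wtg), V)_d$ depends only on the finitely-many graded pieces $V_{i+d}, \ldots, V_{2i+d}$ of $V$. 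Consequently, $\Hom_k(\Ybar_j(\wtg), V_{\geq n})_d = 0$ whenever $n > 2j + d$, and the long exact sequence in cohomology associated to $0 \to V_{\geq n} \to V \to V/V_{\geq n} \to 0$ yields the isomorphism $H^i(\wtg, V)_d \cong H^i(\wtg, V/V_{\geq n})_d$ for $n > 2(i+2) + d$. In particular, every bigraded piece of $H^\bullet(\wtg, V)$ is finite-dimensional and coincides with the corresponding piece of the cohomology of a finite-dimensional quotient.

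By the finite-dimensional case (\cite[Theorem 3.2.4]{Drupieski:2013c}), each $H^\bullet(\wtg, V/V_{\geq n})$ is finitely generated over $\Hbul(\wtg, k)$, hence over the Noetherian subring $R \colonequals S(\wtg_1^*[p])^{(1)}$, over which $\Hbul(\wtg, k)$ is itself finite. The cup product action of $R$ preserves total internal-plus-cohomological degree, with each generator of $R$ shifting bidegree by $(p, -p)$. To finish, I would lift a finite generating set for $H^\bullet(\wtg, V/V_{\geq N})$ for sufficiently large $N$ to cocycles in $C^\bullet(\wtg, V)$ and verify that they generate all of $H^\bullet(\wtg, V)$ as an $\Hbul(\wtg, k)$-module. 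The principal obstacle is choosing $N$ uniformly, i.e., ruling out that new generators are required in arbitrarily high cohomological degrees; this should follow from a Hilbert-series comparison exploiting the polynomial growth of $V$ as a finitely-generated module over the central polynomial subalgebra $S(\wtg_2) \subseteq U(\wtg)$ together with the Noetherianness of $R$.
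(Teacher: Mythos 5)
Your approach is genuinely different from the paper's, but it has a real gap at the step you yourself flag. The degree bookkeeping up to that point is fine: the truncations $V_{\geq n}$ are indeed submodules with finite-dimensional quotients, and the bidegree estimate correctly shows that each fixed bigraded piece $\opH^i(\wtg,V)_d$ agrees with the corresponding piece of $\opH^\bullet(\wtg,V/V_{\geq n})$ for $n$ large. The problem is that the required $n$ grows with the cohomological degree $i$, so no single finite-dimensional quotient controls all of $\opH^\bullet(\wtg,V)$, and degreewise finite-dimensionality together with finite generation of each $\opH^\bullet(\wtg,V/V_{\geq n})$ does not imply finite generation of $\opH^\bullet(\wtg,V)$ over $R = S(\wtg_1^*[p])^{(1)}$. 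The proposed rescue by a Hilbert-series comparison cannot work as stated: Hilbert series do not detect finite generation (the graded $R$-module $\bigoplus_{n\geq 0} k$, with one trivial one-dimensional summand in each degree, has the same Hilbert series as a polynomial ring in one variable but requires infinitely many generators). So the final step, which is the entire content of the lemma once the reduction to $M=k$ is made, is missing.

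For comparison, the paper closes this gap by a two-stage reduction rather than by truncation. First it treats the purely even case $\wtg = \wtg_2$: there $U(\wtg_2)$ is a commutative noetherian polynomial ring of finite global dimension, each $\opH^i(\wtg_2,\wtN)$ is simultaneously a noetherian $U(\wtg_2)$-module (as a subquotient of $\wtN^{\oplus n_i}$ via the Koszul resolution) and a trivial $U(\wtg_2)$-module (computing via an injective resolution), hence finite-dimensional, and the cohomology vanishes above the global dimension; so $\Hbul(U(\wtg_2),\wtN)$ is actually finite-dimensional. Then the Lyndon--Hochschild--Serre spectral sequence for the central extension $U(\wtg_2) \hookrightarrow U(\wtg) \twoheadrightarrow \Lambda(\wtg_1)$ has finite-dimensional coefficients on its $E_2$-page, reducing to the known finite-dimensional case over the exterior algebra $\Lambda(\wtg_1)$ and a standard spectral-sequence finiteness lemma. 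If you want to salvage your truncation strategy, you would need an argument of comparable strength, for instance showing directly that $\opH^\bullet(\wtg,V)$ is a noetherian module over a suitable ring; the paper's use of the central polynomial subalgebra $U(\wtg_2)$ is essentially the mechanism that supplies this.
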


\begin{proof}
As $\Hbul(\wtg,k)$-modules one has $\Ext_{\wtg}^\bullet(\wtM,\wtN) \cong \Ext_{\wtg}^\bullet(k,\Hom_k(\wtM,\wtN))$, and since $\wtM$ is finite-dimensional one has $\Hom_k(\wtM,\wtN) \cong \wtN \otimes \wtM^*$ as $U(\wtg)$-modules. By assumption, $\wtN_0$ generates $\wtN$ and is finite-dimensional. Then it follows that $\wtN \otimes \wtM^*$ is generated as a $U(\wtg)$-module by the finite-dimensional subspace $\wtN_0 \otimes \wtM^*$, and hence $\wtN \otimes \wtM^*$ is finitely-generated. Thus for the remainder of the proof we may assume that $M = k$.

First consider the case $\wtg = \wtg_2$. Then $\wtg$ is abelian, and $U(\wtg)$ is isomorphic to a polynomial ring over $k$ in $\dim_k(\wtg)$ variables. In particular, $U(\wtg)$ is a commutative noetherian ring. The Koszul resolution $Y_\bullet(\wtg)$ is a resolution of $k$ by finitely-generated free $U(\wtg)$-modules; say $Y_i(\wtg) \cong U(\wtg)^{\oplus n_i}$. Then $\opH^i(\wtg,\wtN)$ is a $U(\wtg)$-module subquotient of $\Hom_{U(\wtg)}(Y_i(\wtg),\wtN) \cong \wtN^{\oplus n_i}$. Since $\wtN$ is a finitely-generated $U(\wtg)$-module, this implies that $\opH^i(\wtg,\wtN)$ is a noetherian $U(\wtg)$-module. On the other hand, if $\wtN \rightarrow Q^\bullet$ is a $U(\wtg)$-injective resolution of $\wtN$, then $\Hbul(\wtg,\wtN)$ can be computed as the cohomology of the cochain complex $\Hom_{U(\wtg)}(k,Q^\bullet) = (Q^\bullet)^{U(\wtg)}$. Computing this way, we see that the $U(\wtg)$-action on each cohomology group is trivial. Thus each $\opH^i(\wtg,\wtN)$ is a finitely-generated trivial $U(\wtg)$-module, hence a finite-dimensional $k$-vector space. Finally, since a polynomial ring over $k$ in $n$ variables has global dimension $n$, we get that $\opH^i(\wtg,\wtN) = 0$ for $i > n$. Thus $\Hbul(\wtg,\wtN)$ is finite-dimensional, so in particular is finite under the cup product action of $\Hbul(\wtg,k)$.

Now for the case of general $\wtg$, observe that since $U(\wtg)$ is finite over the subalgebra $U(\wtg_2)$, the module $\wtN$ is finitely-generated over $U(\wtg_2)$. The algebra $U(\wtg_2)$ is a central Hopf subalgebra of $U(\wtg)$, and the Hopf superalgebra quotient $U(\wtg) \sslash U(\wtg_2)$ is isomorphic to the exterior algebra $\Lambda(\wtg_1)$, which in turn identifies with the enveloping algebra of a purely odd abelian Lie superalgebra. The extension of Hopf superalgebras $U(\wtg_2) \hookrightarrow U(\wtg) \twoheadrightarrow \Lambda(\wtg_1)$ gives rise to the LHS spectral sequence
	\begin{equation} \label{eq:wtgLHSspecseq}
	E_2^{i,j}(N) = \opH^i(\Lambda(\wtg_1), \opH^j(U(\wtg_2),\wtN)) \Rightarrow \opH^{i+j}(U(\wtg),\wtN).
	\end{equation}
By the purely even case of the previous paragraph, $\Hbul(U(\wtg_2),\wtN)$ is finite-dimensional. Then by the purely odd case of \cite[Theorem 3.2.4]{Drupieski:2013c}, the $E_2$-page of \eqref{eq:wtgLHSspecseq} is a finite module over the algebra $E_2^{\bullet,0}(k) = \Hbul(\Lambda(\wtg_1),k)$. This implies by \cite[Lemma 1.6]{Friedlander:1997} that $\Hbul(U(\wtg),\wtN)$ is finite over the image of the inflation map $\Hbul(\Lambda(\wtg_1),k) \rightarrow \Hbul(U(\wtg),k)$, so in particular is finite over $\Hbul(U(\wtg),k)$.
\end{proof}

\begin{proposition} \label{prop:EMNfinitenessresults}
Let $\g$ be a finite-dimensional Lie superalgebra over $k$. Let $M$ be a finite-dimen\-sional $\g$-module, and let $N$ be a finitely-generated $\g$-module, considered as filtered via the standard filtrations associated to some fixed choices of finite generating sets. Then:
	\begin{enumerate}
	\item \label{item:stops} The spectral sequence $E(M,N)$ is concentrated in only finitely many rows, and hence stops after finitely many pages.
	\item \label{item:filtrationlayerzero} Writing $F^\bullet \Ext_{\g}^n(M,N)$ for the filtration on $\Ext_{\g}^n(M,N)$ coming from the $E_\infty$-page of $E(M,N)$, there exists an integer $L(M,N) \in \N$ such that
		\[
		F^{n+\ell} \Ext_\g^n(M,N) = 0 \quad \text{for} \quad \ell > L(M,N).
		\]
	\item \label{item:Extfinite} $\Ext_{\g}^\bullet(M,N)$ is finitely-generated under the cup product action of $\Hbul(\g,k)$.
	\end{enumerate}
\end{proposition}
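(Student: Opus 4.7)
The plan is to prove the three parts in order. The key inputs are Lemma~\ref{lemma:E1pagefinite}, which gives that $E_1(M,N) = \Ext_{\wtg}^\bullet(\wtM,\wtN)$ is finite over $\Hbul(\wtg,k)$, together with \cite[Theorem 3.2.4]{Drupieski:2013c}, which says $\Hbul(\wtg,k)$ is finite over the image of $\varphi_{\wtg}$. In combination, $E_1(M,N)$ is finitely generated over the noetherian ring $\varphi_{\wtg}(S(\wtg_1^*[p])^{(1)})$.

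For (1), the diagram \eqref{eq:whvarphidiagram} places the image of $\varphi_{\wtg}$ in the row $j = 0$ of $E_1(k,k)$, and the cup product on the spectral sequence is biadditive on bidegrees, so multiplication by $\varphi_{\wtg}(S)$ preserves the row index $j$. Therefore $E_1(M,N)$, and hence every subsequent page $E_r(M,N)$, is supported in the finite set of rows containing the bidegrees of a chosen finite set of generators, say rows in an interval $[a, b]$. Since $d_r$ shifts the row index by $-(r-1)$, for $r > b - a + 1$ every nonzero source of $d_r$ maps to a zero target, so $d_r = 0$ and the spectral sequence stops.

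For (2), the lower row bound $a$ from (1) combined with the isomorphism $F^i \Ext_{\g}^n / F^{i+1} \Ext_{\g}^n \cong E_\infty^{i, n-i}$ shows that this quotient vanishes whenever $n - i < a$, i.e., whenever $i > n - a$. Since the filtration on $C^n(\g,M,N)$ is bounded above ($F^{s(n)+1} C^n = 0$, as established in Section~\ref{subsec:cliffordspecseq}), iterating the equality $F^i \Ext_{\g}^n = F^{i+1} \Ext_{\g}^n$ from $i = s(n)+1$ down to $i = n - a + 1$ forces $F^{n - a + 1} \Ext_{\g}^n(M,N) = 0$. Setting $L(M,N) = -a$ yields the bound.

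For (3), the plan is a filtered lifting argument. Since $\varphi_{\wtg}(S(\wtg_1^*[p])^{(1)})$ consists of permanent cycles in $E(k,k)$, it embeds into $E_\infty(k,k)$, and $E_\infty(M,N)$, as a subquotient of $E_1(M,N)$, is finitely generated over $\varphi_{\wtg}(S)$ by noetherianity. Choose generators $\bar m_1, \ldots, \bar m_r$ with $\bar m_\ell \in E_\infty^{i_\ell, j_\ell}(M,N)$, and lift each to $m_\ell \in F^{i_\ell} \Ext_{\g}^{i_\ell + j_\ell}(M,N)$. Let $A \colonequals \varphi_\g(S(\gone^*[p])^{(1)}) \subseteq \Hbul(\g,k)$. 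I claim $\{m_\ell\}$ generates $\Ext_{\g}^\bullet(M,N)$ over $A$: for nonzero $m \in \Ext_{\g}^n(M,N)$, let $i_0$ be the smallest index with $m \in F^{i_0}\Ext_{\g}^n$, write $m + F^{i_0+1} = \sum_\ell \bar\alpha_\ell \cdot \bar m_\ell$ with $\bar\alpha_\ell \in \varphi_{\wtg}(S)$, lift $\bar\alpha_\ell$ to $\alpha_\ell \in A$ via the canonical identification in \eqref{eq:varphipi}, and observe that $m - \sum_\ell \alpha_\ell \cdot m_\ell \in F^{i_0+1} \Ext_{\g}^n$; iterate. By (2), the iteration terminates after finitely many steps. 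The main obstacle is verifying that the cup-product action on $\Ext_{\g}^\bullet(M,N)$ descends compatibly to the action on $E_\infty(M,N)$ after one step of filtration, with bidegrees matching carefully through the lifts from $\varphi_{\wtg}(S)$ back to $A$.
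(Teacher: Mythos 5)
Your proof follows essentially the same route as the paper: parts (1) and (2) are argued identically (finite homogeneous generating set over $\im(\varphi_{\wtg})$, which sits in row $j=0$, forces finitely many rows, and the upper bound on the filtration of $C^n(\g,M,N)$ then gives the vanishing $F^{n+\ell}\Ext_\g^n(M,N)=0$), and for part (3) your filtered lifting argument is precisely the content of \cite[Lemma 7.4.5]{Evens:1991}, which the paper simply cites after noting that each page is noetherian over $S(\wtg_1^*[p])^{(1)}$ via the permanent cycles. The ``main obstacle'' you flag at the end is not actually a gap: the compatibility of the cup product with the filtration (hence with the induced product on $E_\infty = \gr$) and the fact that $\varphi_\g(f)$ lifts $\varphi_{\wtg}(f)$ are already established in Section \ref{subsec:cliffordspecseq} ($f \odot g \in F^{i+j}C^{m+n}$ and the commutativity of \eqref{eq:varphipi}), though you should say ``largest,'' not ``smallest,'' index $i_0$ with $m \in F^{i_0}$, since the filtration is decreasing.
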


\begin{proof}
By Lemma \ref{lemma:E1pagefinite}, we get that $E_1(M,N) = \Ext_{\wtg}^\bullet(\wtM,\wtN)$ is finite under the cup product action of $E_1(k,k) = \Hbul(\wtg,k)$, which in turn is finite over the image of $\varphi_{\wtg}: S(\wtg_1^*[p]) \rightarrow \Hbul(\wtg,k)$. We may choose a homogeneous finite generating set for $E_1(M,N)$, which will then be concentrated in only finitely many rows. Since $\im(\varphi_{\wtg})$ is contained in the row $j=0$ of $E_1(k,k)$, this implies that $E_1(M,N)$ is concentrated in the same finitely many rows as the generating set. Then there exists an integer $L(M,N) \in \N$ such that $E_1^{i,j}(M,N) = 0$ for $\abs{j} > L(M,N)$, and hence
	\begin{equation} \label{eq:Einfty=0}
	F^i \Ext_\g^{i+j}(M,N) / F^{i+1} \Ext_\g^{i+j}(M,N) = E_\infty^{i,j}(M,N) = 0 \quad \text{for $\abs{j} > L(M,N)$.}
	\end{equation}
The filtration on $\Ext_{\g}^n(M,N)$ coming from the $E_\infty$-page of $E(M,N)$ is bounded above (because the original filtration on $C^n(\g,M,N)$ is bounded above), so $F^i \Ext_\g^n(M,N) = 0$ for $i \gg 0$. Then taking $i = n+\ell$ and $j = -\ell$, \eqref{eq:Einfty=0} implies that $F^{n+\ell} \Ext_\g^n(M,N) = 0$ for $\ell > L(M,N)$. Finally, by the commutativity of the diagram \eqref{eq:varphipi}, the image $\varphi_{\wtg}: S(\wtg_1^*[p]) \rightarrow \Hbul(\wtg,k)$ consists of permanent cycles in $E_1(k,k)$. Since $E_r(M,N)$ is a subquotient of $E_{r-1}(M,N)$, it follows by induction on $r$ that each page of $E(M,N)$ is a noetherian $S(\wtg_1^*[p])^{(1)}$-module. By \eqref{item:stops}, we may choose $r \gg 0$ such that $E_r(k,k) = E_\infty(k,k)$ and $E_r(M,N) = E_\infty(M,N)$. Then $E_\infty(M,N) = \gr(\Ext_{\g}^\bullet(M,N))$ is noetherian over $\gr(\im(\varphi_{\g})) \subseteq \gr(\Hbul(\g,k))$. Since the filtration on each $\Ext_{\g}^n(M,N)$ is bounded above, this implies by \cite[Lemma 7.4.5]{Evens:1991} that $\Ext_\g^\bullet(M,N)$ is noetherian over $\im(\varphi_{\g})$, hence finitely-generated under the cup product action of $\Hbul(\g,k)$.
\end{proof}

In the last lemma of this section we specialize to the case $M = N$.

\begin{lemma} \label{lemma:powerannihilates}
Let $f \in S(\gone^*[p])^{(1)}$ be a homogeneous polynomial, considered also as a polynomial in $S(\wtg_1^*[p])^{(1)}$ via the canonical identification $S(\gone^*[p])^{(1)} \cong S(\wtg_1^*[p])^{(1)}$. Suppose $\varphi_{\wtg}(f) \cup \id_{\wtM} = 0$ in $\Ext_{\wtg}^\bullet(\wtM,\wtM)$. Then $\varphi_\g(f^\ell) \cup \id_M = 0$ in $\Ext_\g^\bullet(M,M)$ for $\ell > L(M,M)$.
\end{lemma}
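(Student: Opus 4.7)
Let $d$ denote the polynomial degree of $f$, so $\wh{\varphi}_\g$ carries $f$ into $F^{pd} C^{pd}(\g,k)$ and hence $\varphi_\g(f) \in F^{pd} \Hbul(\g,k)$. Write $\zeta := \varphi_\g(f) \cup \id_M \in F^{pd} \Ext_\g^{pd}(M,M)$. The plan is to first show that $\zeta$ lies one filtration level deeper, namely $\zeta \in F^{pd+1}$, then pass to powers to get $\zeta^\ell \in F^{\ell(pd+1)}$, and finally invoke Proposition~\ref{prop:EMNfinitenessresults}\eqref{item:filtrationlayerzero} to kill $\zeta^\ell$ once $\ell > L(M,M)$.

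For the refinement $\zeta \in F^{pd+1}$, I would track the image of $\zeta$ in $E_\infty^{pd,0}(M,M) \hookrightarrow E_1^{pd,0}(M,M) = \Ext_{\wtg}^{pd}(\wtM,\wtM)_{-pd}$. By multiplicativity of $E(M,M)$ as a module over $E(k,k)$ together with the commutativity of~\eqref{eq:varphipi}, this image equals the $(-pd)$-graded component of the class $\varphi_{\wtg}(f) \cup \id_{\wtM}$, which vanishes by hypothesis. Hence $\zeta$ represents $0$ in the $(pd,0)$-piece of the associated graded, giving $\zeta \in F^{pd+1}\Ext_\g^{pd}(M,M)$.

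For the iteration, the assignment $\alpha \mapsto \alpha \cup \id_M : \Hbul(\g,k) \to \Ext_\g(M,M)$ is a ring homomorphism, so $\zeta^\ell = \varphi_\g(f)^\ell \cup \id_M = \varphi_\g(f^\ell) \cup \id_M$, where $\zeta^\ell$ is taken with respect to the cup product ring structure on $\Ext_\g(M,M)$. This cup product is filtration-multiplicative: via the identification $\Ext_\g^\bullet(M,M) \cong \Ext_\g^\bullet(k,\End_k(M))$ and the filtration $F^j \End_k(M) = \{\phi : \phi(F^i M) \subseteq F^{i+j} M\}$ on $\End_k(M)$---which recovers the filtration on $C^\bullet(\g,M,M)$ from Section~\ref{subsec:cliffordspecseq}---the algebra structure on $\End_k(M)$ preserves filtration. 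Thus $\zeta^\ell \in F^{\ell(pd+1)}\Ext_\g^{\ell pd}(M,M) = F^{\ell pd + \ell}\Ext_\g^{\ell pd}(M,M)$. For $\ell > L(M,M)$ we have $\ell pd + \ell \geq \ell pd + L(M,M) + 1$, so Proposition~\ref{prop:EMNfinitenessresults}\eqref{item:filtrationlayerzero} forces $\zeta^\ell = 0$, which is the claim.

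The main obstacle is the refinement in the second paragraph: it is the only place the hypothesis enters, and it supplies the single extra unit of filtration per power that, amplified $\ell$-fold, outruns the universal bound $L(M,M)$. The technical point in the third paragraph, namely filtration-multiplicativity of the cup product on $\Ext_\g(M,M)$ (as opposed to the module action of $\Hbul(\g,k)$ treated in Section~\ref{subsec:cliffordspecseq}), is a straightforward extension and I do not expect it to cause difficulty.
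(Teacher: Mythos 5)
Your argument is correct in substance, but it routes the key amplification step differently from the paper. The paper never isolates the single class $\zeta=\varphi_\g(f)\cup\id_M$; instead it observes that the identity $\alpha\cup\beta=(\alpha\cup\id_{\wtM})\circ\beta$ turns the hypothesis into the statement that $f$ annihilates \emph{all} of $E_1(M,M)$, hence all of $E_\infty(M,M)$, so that cup product with $\varphi_\g(f)$ carries $F^i\Ext_\g^n(M,M)$ into $F^{i+pt+1}\Ext_\g^{n+pt}(M,M)$ for \emph{every} $i$; iterating this module action starting from $\id_M\in F^0$ gives $\varphi_\g(f^\ell)\cup\id_M\in F^{pt\ell+\ell}$ directly. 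That route only uses the already-established fact that $C^\bullet(\g,M,M)$ is a filtered DG module over $C^\bullet(\g,k)$ via $\odot$. Your route gains the extra filtration unit only once, for $\zeta$ itself, and then amplifies by taking powers \emph{inside} $\Ext_\g^\bullet(M,M)$; this genuinely requires the additional fact that the internal (Yoneda) product on $\Ext_\g^\bullet(M,M)$ is filtration-multiplicative, which the paper nowhere establishes and which you defer as ``straightforward.'' It is true and standard (realize the Yoneda product on cochains via the coproduct on $Y(\g)$ and the multiplication on the filtered algebra $\End_k(M)$, and check this filtration on $C^\bullet(\g,k,\End_k(M))$ matches the one in Section \ref{subsec:cliffordspecseq}), but it is an extra verification that the paper's iteration of the module action avoids entirely. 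Two small cautions: your claimed inclusion $E_\infty^{pd,0}(M,M)\hookrightarrow E_1^{pd,0}(M,M)$ need not hold --- unlike $E(k,k)$, the spectral sequence $E(M,M)$ is not confined to rows $j\le 0$, so $E_\infty^{pd,0}(M,M)$ is in general only a subquotient of $E_1^{pd,0}(M,M)$; fortunately your argument only needs that the class of $\zeta$ in $E_\infty^{pd,0}(M,M)$ is represented by $\varphi_{\wtg}(f)\cup\gr(\id_M)=\varphi_{\wtg}(f)\cup\id_{\wtM}=0$, which survives to any subquotient. Second, to identify that representative you are implicitly using the same identity $\alpha\cup\beta=(\alpha\cup\id_{\wtM})\circ\beta$ on the $E_1$-page that the paper makes explicit; it is worth stating.
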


\begin{proof}
The homomorphism $\pi \circ \varphi_\g: S(\gone^*[p])^{(1)} \rightarrow \Hbul(\wtg,k)$ makes $E(M,M)$ into a spectral sequence of modules over the algebra $S(\gone^*[p])^{(1)}$. Given $\alpha \in \Hbul(\wtg,k)$ and $\beta \in \Ext_{\wtg}^\bullet(\wtM,\wtM)$, one has
	\[
	\alpha \cup \beta = (\alpha \cup \id_{\wtM}) \circ \beta,
	\]
where $\circ$ denotes the Yoneda composition of extensions, so the hypothesis $\varphi_{\wtg}(f) \cup \id_{\wtM} = 0$ implies that the cup product action of $f$ on $E_1(M,M) = \Ext_{\wtg}^\bullet(\wtM,\wtM)$ is identically zero. Then the action of $f$ on all subsequent pages of $E(M,M)$ is also zero. Since $E(M,M)$ is concentrated in only finitely many rows, one has $E_r(M,M) = E_\infty(M,M)$ for all $r \gg 0$, so the cup product action of $f$ on $E_\infty(M,M)$ is identically zero. Say $f$ has polynomial degree $t$, so that $\varphi_\g(f) \in \opH^{pt}(\g,k) = F^{pt} \opH^{pt}(\g,k)$. Then given $\alpha \in F^i \Ext_\g^n(M,M)$, one must have
	\begin{equation} \label{eq:factionfiltrationdegree}
	\varphi_\g(f) \cup \alpha \in F^{i+pt+1} \Ext_\g^{n+pt}(M,M) \subseteq F^{i+pt} \Ext_\g^{n+pt}(M,M).
	\end{equation}
Since $\id_M \in F^0 \Ext_\g^0(M,M)$, one can inductively apply \eqref{eq:factionfiltrationdegree} to get
	\[
	\varphi_\g(f^\ell) \cup \id_M = \varphi_\g(f)^\ell \cup \id_M \in F^{pt\ell + \ell} \Ext_\g^{pt\ell}(M,M)
	\]
for all $\ell \in \N$. Then by Proposition \ref{prop:EMNfinitenessresults}\eqref{item:filtrationlayerzero}, $\varphi_\g(f^\ell) \cup \id_M = 0$ if $\ell > L(M,M)$.
\end{proof}

\subsection{Homological dimensions for enveloping superalgebras} \label{subsec:homdimenveloping}

In this section we collect some results that will enable us to apply Theorem \ref{thm:equivprojdimfinite} to the enveloping algebra of a finite-dimensional Lie superalgebra.

\begin{lemma} \label{lemma:centralsubalgebra}
Let $\g$ be a finite-dimensional Lie superalgebra over a field $k$ of characteristic $p \geq 3$. There exists a purely even central subalgebra $\calO \subseteq U(\g)$ such that $\calO$ is isomorphic to a polynomial ring in $\dim_k(\gzero)$ variables and $U(\g)$ is a free $\calO$-module of finite rank.
\end{lemma}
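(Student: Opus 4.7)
The plan is to build $\calO$ directly by exploiting the characteristic-$p$ identity $\ad(x^{p^i}) = (\ad x)^{p^i}$ (as even derivations of $U(\g)$) for $x \in \gzero$, together with the finite-dimensionality of $\End_k(\g)$.

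First I would verify the $p$-power identity. Both sides are (even) derivations of $U(\g)$, so it suffices to compare them on the generating set $\g$. For $i=1$, a direct Leibniz-style expansion of $[x^p, y]$ for $y \in \g$ collapses to $(\ad x)^p(y)$ because $\binom{p}{j} \equiv 0 \pmod{p}$ for $1 \le j \le p-1$; the general case then follows by iteration.

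Next I would fix a basis $x_1, \ldots, x_n$ of $\gzero$ with $n = \dim_k \gzero$. Since $\End_k(\g)$ is finite-dimensional, the operators $(\ad x_i)^{p^j}$ for $j \ge 0$ cannot remain $k$-linearly independent, so for each $i$ there exist a smallest $N_i \ge 0$ and scalars $\lambda_{i,j} \in k$ with
\[
(\ad x_i)^{p^{N_i+1}} = \sum_{j=0}^{N_i} \lambda_{i,j}\,(\ad x_i)^{p^j}.
\]
Setting $z_i := x_i^{p^{N_i+1}} - \sum_{j=0}^{N_i} \lambda_{i,j}\, x_i^{p^j} \in U(\gzero)$, the identity of the first step forces $\ad(z_i)$ to vanish on $\g$, hence on all of $U(\g)$, so each $z_i$ is central. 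Since each $z_i$ lies in the purely even subalgebra $U(\gzero)$, the subalgebra $\calO := k[z_1, \ldots, z_n]$ is purely even and central in $U(\g)$.

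It remains to establish the polynomial-ring and freeness claims, which I would verify by passing to the standard PBW filtration on $U(\g)$, whose associated graded is $\gr U(\g) \cong S(\gzero) \otimes \Lambda(\gone)$. The symbol of $z_i$ is precisely $x_i^{p^{N_i+1}}$, and these symbols are algebraically independent in the polynomial algebra $S(\gzero)$; a leading-term argument therefore forces the $z_i$ themselves to be algebraically independent in $U(\g)$, so $\calO$ is polynomial of rank $n = \dim_k \gzero$ with $\gr \calO \cong k[x_1^{p^{N_1+1}}, \ldots, x_n^{p^{N_n+1}}]$. The graded object $S(\gzero) \otimes \Lambda(\gone)$ is manifestly free of finite rank $\prod_i p^{N_i+1} \cdot 2^{\dim\gone}$ over this polynomial subring, with basis the obvious PBW monomials $x_1^{a_1} \cdots x_n^{a_n} y_{i_1} \cdots y_{i_r}$ (with $0 \le a_i < p^{N_i+1}$ and $1 \le i_1 < \cdots < i_r \le \dim \gone$). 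A standard filtered-to-graded transfer argument then lifts this to a free $\calO$-basis of $U(\g)$.

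The only real subtlety is the characteristic-$p$ identity of the first step; once that is in hand, the finite-dimensionality of $\End_k(\g)$ forces the existence of central $p$-polynomial lifts of the $x_i$, and the remaining polynomial-ring and freeness statements follow by routine PBW bookkeeping.
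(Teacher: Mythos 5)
Your proof is correct and is essentially the paper's argument written out in full: the paper reduces to $U(\gzero)$ via the super PBW theorem and then invokes the proof of Strade--Farnsteiner, Theorem 5.1.2, applied to $\ad(e)$ acting on all of $\g$ --- which is exactly your construction of central $p$-polynomials $z_i$ from a linear dependence among the $(\ad x_i)^{p^j}$, followed by the leading-term/PBW freeness argument.
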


\begin{proof}
It follows from the PBW theorem for Lie superalgebras (see Theorem 3.2.2 and Remark 3.2.3 of \cite{Bahturin:1992}) that the enveloping algebra $U(\g)$ is free of finite rank over the subalgebra $U(\gzero)$, so it suffices to exhibit a polynomial subalgebra $\calO \subseteq U(\gzero)$ such that $\calO$ is central in $U(\g)$ and $U(\gzero)$ is free of finite rank over $\calO$. For this one can follow the proof of \cite[Theorem 5.1.2]{Strade:1988}, considering $\ad(e)$ for $e \in \gzero$ as an endomorphism of the finite-dimensional $k$-vector space $\g$.
\end{proof}

Given a $k$-superalgebra $A$, one can form the smash product algebra $A \# k\Z_2$. As a vector space, $A \# k\Z_2$ is equal to $A \otimes_k k\Z_2$, the tensor product of $A$ and the group ring $k\Z_2$. Multiplication in $A \# k\Z_2$ is induced by the products in $A$ and $k\Z_2$ and by the relation $(1 \otimes \one)(a \otimes \zero) = (-1)^{\ol{a}} a \otimes \one$. If $A$ is a Hopf superalgebra, then $A \# k\Z_2$ becomes an ordinary Hopf algebra; cf.\ \cite[\S10.6]{Montgomery:1993}.

\begin{lemma} \label{lem:smashproductONoether}
Let $\g$ be a finite-dimensional Lie superalgebra over a field $k$ of characteristic $p \geq 3$, and let $\calO \subseteq U(\g)$ be a central subalgebra as in \ref{lemma:centralsubalgebra}. Then $U(\g) \# k\Z_2$ is a Noether $\calO$-algebra in the sense of Definition \ref{def:noetheralgebra}. In particular, $U(\g) \# k\Z_2$ is a noetherian PI Hopf algebra.
\end{lemma}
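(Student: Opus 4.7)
The plan is to verify the three ingredients of a Noether $\calO$-algebra directly from Lemma \ref{lemma:centralsubalgebra} and then deduce the supplementary properties (noetherian, PI, Hopf) as essentially formal consequences. First, by Lemma \ref{lemma:centralsubalgebra} the subalgebra $\calO \subseteq U(\g)$ is a polynomial ring in $\dim_k(\gzero)$ variables (hence commutative noetherian) and $U(\g)$ is free of finite rank over $\calO$. Since $\calO$ is purely even and lies in the center of $U(\g)$, the smash product relation $(1 \otimes \one)(a \otimes \zero) = (-1)^{\ol{a}} a \otimes \one$ reduces to $(1 \otimes \one)(a \otimes \zero) = a \otimes \one$ for $a \in \calO$, so $\calO \otimes 1$ is central in $U(\g) \# k\Z_2$.

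Next, as a $k$-vector space $U(\g) \# k\Z_2 \cong U(\g) \otimes_k k\Z_2$, and multiplication by $\calO$ on the left acts through the first factor; hence $U(\g) \# k\Z_2$ is free as a left $\calO$-module of rank $2 \cdot \mathrm{rank}_\calO(U(\g))$. Combining these two observations shows that $U(\g) \# k\Z_2$ is a Noether $\calO$-algebra in the sense of Definition \ref{def:noetheralgebra}.

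For the concluding assertions, the noetherian property is standard: any ring that is finitely generated as a module over a central noetherian subring is itself left and right noetherian. For the PI property, the action of $U(\g) \# k\Z_2$ on itself by left multiplication gives an embedding into $\End_\calO(U(\g) \# k\Z_2) \cong M_N(\calO)$, where $N$ is the $\calO$-rank just computed; since matrix algebras over a commutative ring satisfy the standard identity $s_{2N}$, so does the subalgebra $U(\g) \# k\Z_2$. Finally, the Hopf algebra structure is the one recalled in the paragraph immediately preceding the lemma statement, obtained from the Hopf superalgebra structure on $U(\g)$ via the general construction of \cite[\S10.6]{Montgomery:1993}.

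The argument is essentially bookkeeping; there is no real obstacle, the only mild subtlety is checking that the centrality of $\calO$ in $U(\g)$ persists after forming the smash product with $k\Z_2$, which is why the hypothesis that $\calO$ is \emph{purely even} in Lemma \ref{lemma:centralsubalgebra} is crucial.
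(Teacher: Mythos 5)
Your proof is correct and follows essentially the same route as the paper: observe that the purely even central subalgebra $\calO$ remains central in the smash product, that $U(\g)\# k\Z_2$ is free of finite rank over it, and conclude. The only difference is that you derive the noetherian and PI properties directly (via the regular representation into $M_N(\calO)$ and the Amitsur--Levitzki standard identity), whereas the paper simply cites \cite[Corollary 13.1.13(iii)]{McConnell:2001} for these consequences.
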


\begin{proof}
Since the algebra $\calO$ of Lemma \ref{lemma:centralsubalgebra} is purely even, its image in $U(\g) \# k\Z_2$ remains central, and $U(\g) \# k\Z_2$ is then a free $\calO$-module of finite rank, so $U(\g) \# k\Z_2$ is a Noether $\calO$-algebra. Then $U(\g) \# k\Z_2$ is a noetherian PI Hopf algebra by \cite[Corollary 13.1.13(iii)]{McConnell:2001}.
\end{proof}

Again let $A$ be a $k$-superalgebra. Each $A$-supermodule $M$ lifts to an $A \# k\Z_2$-module by having the non-identity element $\one \in \Z_2 \subset k\Z_2$ act via the sign automorphism $m \mapsto (-1)^{\ol{m}}m$. Conversely, since $\chr(k) \neq 2$, each $A \# k\Z_2$-module $M$ decomposes under the action of $\Z_2$ into a trivial isotypic component $M_{\zero}$ and a nontrivial isotypic component $M_{\one}$, and the decomposition $M = M_{\zero} \oplus M_{\one}$ then makes $M$ into an $A$-supermodule. Via this equivalence, it follows that an $A$-supermodule is injective (resp.\ projective) if and only if it is injective (resp.\ projective) as an $A \# k\Z_2$-module, and given $A$-supermodules $M$ and $N$, one gets $\Ext_{A \# k\Z_2}^\bullet(M,N) = \Ext_A^\bullet(M,N)_{\zero}$; for further discussion, see \cite[\S2.3]{Drupieski:2019a}.

Recall that an algebra $A$ is Gorenstein if it has finite injective dimension as a left or right module over itself.

\begin{lemma}
Let $\g$ be a finite-dimensional Lie superalgebra over a field $k$ of characteristic $p \geq 3$. Then $U(\g)$ is a noetherian Gorenstein algebra, and for each finite $U(\g)$-supermodule $M$ one has
	\[
	\projdim_{U(\g)}(M) < \infty \quad \text{if and only if} \quad \injdim_{U(\g)}(M) < \infty.
	\]
\end{lemma}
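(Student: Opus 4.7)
The plan is to apply Theorem~\ref{thm:equivprojdimfinite} from the appendix to the smash product $A = U(\g) \# k\Z_2$ and then transfer the conclusions across the correspondence between $U(\g)$-supermodules and $A$-modules. First, $U(\g)$ is noetherian because, by Lemma~\ref{lemma:centralsubalgebra}, it is a finite free module over the polynomial (hence noetherian) ring $\calO$.

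By Lemma~\ref{lem:smashproductONoether} the algebra $A$ is a Noether $\calO$-algebra, and since $\calO$ is regular (hence Gorenstein), Theorem~\ref{thm:equivprojdimfinite} will give that $A$ itself is Gorenstein, i.e., $\injdim_A(A) < \infty$, and that for every finite $A$-module $N$, finiteness of $\projdim_A(N)$ is equivalent to finiteness of $\injdim_A(N)$.

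To transfer these conclusions to $U(\g)$, I will use that $\chr(k) \ne 2$ together with the dictionary between $U(\g)$-supermodules and $A$-modules recalled in Section~\ref{subsec:homdimenveloping}, combined with the observation that $A$ is free of rank $2$ as both a left and right $U(\g)$-module. Since $A$ is free over $U(\g)$, restriction $A\text{-mod} \to U(\g)\text{-mod}$ is exact and preserves both projectives and injectives (its left adjoint $A \otimes_{U(\g)} -$ and its right adjoint $\Hom_{U(\g)}(A,-)$ are both exact). An $A$-injective resolution of $A$ of finite length therefore restricts to a $U(\g)$-injective resolution of $A|_{U(\g)} \cong U(\g)^{\oplus 2}$, yielding
\[
\injdim_{U(\g)}(U(\g)) \leq \injdim_A(A) < \infty,
\]
so $U(\g)$ is Gorenstein. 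For a finite $A$-module $M$, both $A \otimes_{U(\g)} M|_{U(\g)}$ and $\Hom_{U(\g)}(A, M|_{U(\g)})$ are $A$-isomorphic to $M \oplus M^\sigma$ (where $M^\sigma$ denotes $M$ with its $\Z_2$-grading flipped); inducing a $U(\g)$-projective resolution and coinducing a $U(\g)$-injective resolution of $M|_{U(\g)}$ then gives $\projdim_{U(\g)}(M) = \projdim_A(M)$ and $\injdim_{U(\g)}(M) = \injdim_A(M)$. Combining this with the equivalence for $A$ yields the desired statement for finite $U(\g)$-supermodules.

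The main obstacle is the precise content of Theorem~\ref{thm:equivprojdimfinite}: one needs the black box asserting that a Noether algebra over a regular commutative noetherian ring is Gorenstein and satisfies the Avramov--Iyengar equivalence of finite projective and finite injective dimension for finite modules. Once that input is in hand, the remaining work is the routine bookkeeping of moving between $U(\g)$-modules, $U(\g)$-supermodules, and $A$-modules; the only delicate point is to keep track of which $\calO$-finiteness and $\chr(k) \ne 2$ hypotheses are actually being used at each step.
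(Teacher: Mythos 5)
There is a genuine gap at the core of your argument: Theorem~\ref{thm:equivprojdimfinite} does not say what you need it to say. That theorem characterizes finiteness of $\projdim_A(M)$ for a finite module $M$ over a Noether $R$-algebra $A$ in terms of eventual vanishing of $\Ext_A^i(M,N)$ for finite $N$; it contains no hypothesis that $R$ be regular, no conclusion that $A$ is Gorenstein, and no statement relating projective and injective dimension. Moreover, the black box you propose --- that a Noether algebra over a regular commutative noetherian ring is Gorenstein and satisfies the equivalence of finite projective and finite injective dimension --- is false in that generality: already $A = k[x,y]/(x^2,xy,y^2)$ is a Noether $k$-algebra over the regular ring $k$ with $\injdim_A(A)=\infty$, and over a finite-dimensional non-self-injective algebra one easily finds finite modules whose projective and injective dimensions are not simultaneously finite. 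So no amount of bookkeeping with the supermodule/$A$-module dictionary can rescue the argument as written; the Gorenstein property must come from somewhere else. (Theorem~\ref{thm:equivprojdimfinite} is indeed used in the paper, but for Theorem~\ref{theorem:0varietyimpliesfinprojdim}, not here.)

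The missing input is the Hopf algebra structure. The paper's proof applies results of Wu and Zhang on noetherian PI Hopf algebras to $A = U(\g)\# k\Z_2$: by Lemma~\ref{lem:smashproductONoether} this is a noetherian PI Hopf algebra, every irreducible $A$-module is finite-dimensional over $k$ (a lemma of Bahturin et al.), and then Wu--Zhang's theorem yields that $A$ is Gorenstein, with their Lemma~4.2 supplying the equivalence $\projdim_A(M)<\infty$ if and only if $\injdim_A(M)<\infty$ for finite $M$. The transfer step you describe --- passing between $U(\g)$-supermodules and $A$-modules, using that the rank-two free extension preserves projectives and injectives --- is correct and is essentially the discussion preceding the lemma in the paper, but it is the easy half of the argument.
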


\begin{proof}
We apply the results of Wu and Zhang \cite{Wu:2003} and the discussion preceding the lemma to the noetherian PI Hopf algebra $U(\g) \# k\Z_2$. Each irreducible $U(\g) \# k\Z_2$-module is finite-dimensional over $k$ by \cite[Lemma 2.4]{Bahturin:1992}, so $U(\g) \# k\Z_2$ is then Gorenstein by \cite[Theorem 3.5]{Wu:2003}. The second assertion of the lemma is now a consequence of the first by \cite[Lemma 4.2]{Wu:2003}.
\end{proof}

\section{Support varieties} \label{section:supportvarieties}

In Section \ref{subsec:preliminariessupport} we recall the definition of cohomological support varieties and some basic results about support varieties for restricted and non-restricted Lie superalgebras. Then in Sections \ref{subec:supportvargraded} and \ref{subsec:supportvarungraded} we prove a rank variety description for the support varieties of finite-dimensional supermodules, first in the context of certain graded Lie superalgebras, and then for arbitrary finite-dimensional Lie superalgebras over $k$. In Section \ref{subsec:applications} we present our main applications, including a characterization of when a finite-dimensional $\g$-supermodule has finite projective dimension.

From now on we assume that $k$ is an algebraically closed field of characteristic $p \geq 3$.

\subsection{Preliminaries on support varieties} \label{subsec:preliminariessupport}

Given a graded superalgebra $R = \bigoplus_{n \in \Z} R^n$ that is graded-commutative in the sense of \eqref{eq:gradedcommutative}, set
	\[
	\ol{R} = \bigoplus_{n \in \Z} \left( (R^{2n})_{\zero} \oplus (R^{2n+1})_{\one} \right) = R_{\zero}^{\ev} \oplus R_{\one}^{\odd}.
	\]
Then $\ol{R}$ is a graded ring that is commutative in the non-graded sense, and \cite[Corollary 2.2.5]{Drupieski:2019a} implies that the inclusion $\ol{R} \hookrightarrow R$ induces an isomorphism $\ol{R}/\Nil(\ol{R}) \cong R/\Nil(R)$. We then define the maximal ideal spectrum of $R$,
	\[
	\Max(R) = \Max(\ol{R}),
	\]
to be the set of maximal ideals of the ring $R/\Nil(R)$, considered as a topological space via the Zariski topology. If $R$ is a finitely-generated $k$-algebra (as will be the case in all of our particular situations of interest), then $\Max(R)$ is an affine algebraic variety.

If $A$ is a Hopf superalgebra over $k$, then the cohomology ring $\Hbul(A,k)$ is a graded-commutative superalgebra by \cite[Corollary 2.3.6]{Drupieski:2019a}. In this case we set $H(A,k) = \ol{\Hbul(A,k)}$, and we write
	\[
	\abs{A} = \Max\left( \Hbul(A,k) \right) = \Max \left( H(A,k) \right)
	\]
for the cohomological spectrum of $A$. Given a left $A$-module $M$, let $I_A(M)$ be the annihilator ideal for the left cup product action of $\Hbul(A,k)$ on $\Ext_A^\bullet(M,M)$. Equivalently, $I_A(M)$ is the kernel of the superalgebra homomorphism
	\[
	\Phi_M: \Hbul(A,k) = \Ext_A^\bullet(k,k) \rightarrow \Ext_A^\bullet(M,M)
	\]
induced by the tensor product functor $- \otimes M$. Then the support variety $\abs{A}_M$ is defined by
	\[
	\abs{A}_M = \Max\left( \Hbul(A,k) / I_A(M) \right).
	\]

Now let $\g$ be a finite-dimensional Lie superalgebra over $k$, and let $\varphi : S(\gone^*[p])^{(1)} \rightarrow \Hbul(\g,k)$ be the graded superalgebra homomorphism of \eqref{eq:varphi}. Given a $\g$-module $M$, set $I_\g(M) = I_{U(\g)}(M)$, and let $J_\g(M) = \varphi^{-1}(I_\g(M))$. One has $I_\g(k) = \set{0}$, so $\ker(\varphi) = J_\g(k) \subseteq J_\g(M)$. We now define $\Chi_\g(M)$ to be the closed subvariety of $\Chi_\g(k) \colonequals \Max( S(\gone^*[p])^{(1)} / J_\g(k))$ defined by $J_\g(M)$,
	\begin{equation} \label{eq:ChigMdef}
	\Chi_\g(M) = \Max \left( S(\gone^*[p])^{(1)}/J_\g(M) \right).
	\end{equation}
Since $S(\gone^*)^{(1)} \cong S((\gone^*)^{(1)}) \cong S((\gone^{(1)})^*)$, $\Chi_\g(M)$ identifies with a subset of $\gone^{(1)}$. The next theorem is a composite of Proposition 4.2.2, Corollary 4.2.3, and Theorem 4.2.4 of \cite{Drupieski:2019a}.\footnote{In \cite[\S\S4--5]{Drupieski:2019a}, we erroneously omitted Frobenius twists from the descriptions of certain varieties; we correct those omissions in this section.}

\begin{theorem} \label{thm:homeomorphism}
For each $\g$-module $M$, the induced homomorphism
	\[
	\varphi_M: S(\gone^*[p])^{(1)}/J_\g(M) \rightarrow \Hbul(\g,k)/I_\g(M)
	\]
is injective modulo nilpotents and is surjective onto $p$-th powers, and thus induces a homeomorphism
	\[
	\varphi_M^*: \abs{U(\g)}_M \simeq \Chi_\g(M).
	\]
The variety $\Chi_\g(k)$ is isomorphic to the Frobenius twist of the odd nullcone of $\g$, i.e.,
	\begin{equation} \label{eq:oddnullconeiso}
	\Chi_\g(k)^{(-1)} \cong \set{ x \in \gone : [x,x]=0}.
	\end{equation}
\end{theorem}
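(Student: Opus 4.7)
The plan is to deduce all three assertions from the algebra map $\varphi_\g : S(\gone^*[p])^{(1)} \to \Hbul(\g,k)$ of \eqref{eq:varphi} combined with the finite-generation result of \cite[Theorem 3.2.4]{Drupieski:2013c}. I would work in three stages: first handle the case $M = k$, then bootstrap to general $M$ by passing to module quotients, and finally analyze $\Chi_\g(k)$ geometrically to identify it with the Frobenius twist of the odd nullcone.

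Injectivity of $\varphi_M$ modulo nilpotents is formal: since $J_\g(M) \colonequals \varphi_\g^{-1}(I_\g(M))$, one has $\sqrt{J_\g(M)} = \varphi_\g^{-1}(\sqrt{I_\g(M)})$, so the map on reduced quotients is injective. The substantive task is surjectivity onto $p$-th powers modulo nilpotents. For $M = k$, recall that $\wh{\varphi}_\g$ sends $z^{(1)}$ to $z^p \in S^p(\gone^*) \subset \Lambda_s(\g^*)$, which is a cocycle because $\partial$ is a derivation and $p \equiv 0$ in $k$. The nilpotent radical of the cochain algebra $\Lambda_s(\g^*) = \Lambda(\gzero^*) \otimes S(\gone^*)$ is the finite-dimensional (hence nilpotent) ideal generated by $\gzero^*$, using $w^2 = 0$ for $w \in \gzero^*$ by graded commutativity with $\chr(k) \neq 2$. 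Thus any cocycle $f$ representing $\alpha \in \Hbul(\g,k)$ decomposes as $f = \bar f + g$ with $\bar f \in S(\gone^*)$ and $g$ nilpotent; applying the freshman's dream in characteristic $p$, $f^p = \bar f^p + g^p$ with $g^p$ again nilpotent, hence $\alpha^p$ equals $\varphi_\g(\bar f^{(1)}) = [\bar f^p]$ modulo nilpotents. Combined with the finiteness of $\Hbul(\g,k)$ over $\im(\varphi_\g)$, this handles $M = k$.

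For general $M$, the standard isomorphism $\Ext_\g^\bullet(M,M) \cong \Ext_\g^\bullet(k, M \otimes M^*)$ together with Proposition \ref{prop:EMNfinitenessresults}\eqref{item:Extfinite} gives finiteness of $\Hbul(\g,k)/I_\g(M)$ over $\im(\varphi_M)$, and surjectivity onto $p$-th powers descends from the $M = k$ case by passing to the quotient. The homeomorphism $\varphi_M^* : \abs{U(\g)}_M \simeq \Chi_\g(M)$ then follows from the classical principle that a finite homomorphism of finitely generated commutative $\F_p$-algebras which is injective modulo nilpotents and surjective onto $p$-th powers induces a homeomorphism of $\Max$-spectra, Frobenius being a universal homeomorphism on reduced $\F_p$-schemes. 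The $\ol{\cdot}$-construction of Section~\ref{subsec:preliminariessupport} ensures we are working with genuinely commutative subrings where this principle applies.

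For the last assertion, $\Chi_\g(k) = \Max(S(\gone^*[p])^{(1)}/J_\g(k))$ is a closed subvariety of $\Max(S(\gone^*[p])^{(1)}) \cong \gone^{(1)}$. A linear form $z^{(1)} \in \gone^*[p]^{(1)}$ lies in the radical of $J_\g(k)$ precisely when some power of $z^p \in S(\gone^*)$ is a coboundary in $\Lambda_s(\g^*)$. The relevant coboundaries in $S(\gone^*)$ arise from the derivation identity $\partial(w \cdot q) = \partial(w) q \pm w \partial(q)$ for $w \in \gzero^*$ and $q \in S(\gone^*)$, whose $S$-component is $\partial(w)|_{S^2(\gone^*)} \cdot q$; the component $\partial|_{\gzero^*} : \gzero^* \to S^2(\gone^*)$ is dual to the symmetric odd-odd bracket $\gone \otimes \gone \to \gzero$, $(y_1,y_2) \mapsto [y_1,y_2]$. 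After untwisting Frobenius, the ideal $\sqrt{J_\g(k)}$ therefore becomes the ideal in $S(\gone^*)^{(-1)}$ whose zero locus is $\set{x \in \gone : [x,x] = 0}$. I expect this last cochain-level analysis to be the main obstacle: one must carefully identify all coboundary contributions in $S(\gone^*)$ modulo the nil ideal and reconcile the $p$-th power Frobenius with the quadratic vanishing condition, ensuring no extraneous coboundaries enlarge the radical.
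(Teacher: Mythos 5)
First, a point of comparison: the paper does not actually prove Theorem \ref{thm:homeomorphism} --- it is quoted verbatim as ``a composite of Proposition 4.2.2, Corollary 4.2.3, and Theorem 4.2.4 of [Drupieski:2019a]'' --- so any self-contained argument you give is necessarily a different route from the paper's, and the real comparison is with that cited source. Your reconstruction of the first two assertions is essentially the standard Friedlander--Suslin-style argument and is sound: injectivity modulo nilpotents is indeed formal from $J_\g(M) = \varphi_\g^{-1}(I_\g(M))$, and the decomposition $f = \bar f + g$ with $g$ in the ideal generated by $\gzero^*$ does give $\alpha^p \in \im(\varphi_\g) + \Nil$, which suffices for the homeomorphism on $\Max$-spectra. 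Two small corrections: that ideal is $\Lambda^{\geq 1}(\gzero^*) \otimes S(\gone^*)$, which is \emph{not} finite-dimensional --- it is nilpotent because $\Lambda^{>\dim\gzero}(\gzero^*) = 0$; and your argument only shows $f^p - \bar f^p$ lies in that ideal (every word in the expansion containing a $g$ does), so $\alpha^p$ agrees with $\varphi_\g(\bar f^{(1)})$ only up to the class of a nilpotent cocycle, i.e.\ up to nilpotents. That weaker form is all the topological conclusion needs.

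The genuine gap is in the identification \eqref{eq:oddnullconeiso}, and it is exactly where you predicted. Knowing that the $S(\gone^*)$-component of a coboundary $\partial(\omega)$ is $\sum_i Q(w_i)h_i$ (where $Q: \gzero^* \to S^2(\gone^*)$ is dual to $x \mapsto \tfrac12[x,x]$ and $\sum_i w_i h_i$ is the $\gzero^* \otimes S(\gone^*)$-component of $\omega$) does not by itself determine $\ker(\varphi_\g)$: an element of $\im(\wh{\varphi}_\g) \subseteq S(\gone^*)$ whose value matches the $S(\gone^*)$-component of some coboundary need not \emph{be} a coboundary, since $\partial(\omega)$ carries extra components in $\Lambda^{\geq 1}(\gzero^*)\otimes S(\gone^*)$. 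Both inclusions need an argument. For $V(\ker\varphi_\g)^{(-1)} \supseteq \set{x : [x,x]=0}$, restrict cochains along $\subgrp{x} \hookrightarrow \g$: the target complex is $k[t]$ with zero differential, so $\wh{\varphi}_\g(F) = \partial(\omega)$ forces $F(x^{(1)}) = 0$. For the reverse inclusion, if $F$ vanishes on the twisted nullcone then some power satisfies $\wh{\varphi}_\g(F^M) = \sum_i Q(w_i)h_i$, and then $\partial\bigl(\sum_i w_i h_i\bigr) - \wh{\varphi}_\g(F^M)$ is a \emph{cocycle lying in the nilpotent ideal} $\Lambda^{\geq 1}(\gzero^*)\otimes S(\gone^*)$; hence $\varphi_\g(F^M)$ is a nilpotent cohomology class and $F \in \sqrt{\ker\varphi_\g}$. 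This is the same nilpotence device you already deployed for surjectivity onto $p$-th powers, so the gap is fillable with tools you have in hand --- but as written the proposal stops short of closing it.
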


From now one we may make the identification in \eqref{eq:oddnullconeiso} without further comment. Then given $x \in \Chi_\g(k)^{(-1)}$ and a $\g$-module $M$, let $M|_{\subgrp{x}}$ denote the restriction of $M$ to the $k$-sub\-algebra $\Lambda(x)$ of $U(\g)$ generated by $x$. If $x \neq 0$, then $\Lambda(x)$ is an exterior algebra generated by $x$, while if $x=0$, then $\Lambda(x) = k$. We say that $M|_{\subgrp{x}}$ is \emph{free} if $M$ is free as a $\Lambda(x)$-module, and we let $\Chi_\g'(M)$ be the subvariety of $\Chi_\g(k)$ defined by
	\begin{equation} \label{eq:rankvarietydef}
	\Chi_{\g}'(M)^{(-1)} = \set{x \in \Chi_\g(k)^{(-1)} : M|_{\subgrp{x}} \text{ is not free}} \cup \set{0}.
	\end{equation}
Given $m \in \N$, let $\kmm$ denote the natural representation of $\glmm$. The following proposition is then a composite of Propositions 4.2.2 and 4.3.1 of \cite{Drupieski:2019a}.
	
\begin{proposition} \label{prop:rankinclusion}
Let $M$ be a finite-dimen\-sional $\g$-module. Then
	\begin{equation} \label{eq:rankinclusion}
	\Chi_{\g}'(M) \subseteq \Chi_\g(M).
	\end{equation}
If the equality $\Chi_{\glmm}'(\kmm) = \Chi_{\glmm}(\kmm)$ holds for all $m \in \N$, then \eqref{eq:rankinclusion} is an equality.
\end{proposition}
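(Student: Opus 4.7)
For the inclusion in~\eqref{eq:rankinclusion}, I would fix a nonzero $x \in \Chi_\g(k)^{(-1)}$ with $M|_{\subgrp{x}}$ not free and argue, via naturality of cohomology under the inclusion $\iota_x : \subgrp{x} \hookrightarrow \g$, that the maximal ideal $\fm_x \subset S(\gone^*[p])^{(1)}/J_\g(k)$ corresponding to $x^{(1)}$ contains $J_\g(M)$. The dual map $\iota_x^* : \gone^*[p] \to \subgrp{x}^*[p]$ extends to an algebra map on symmetric algebras, and the restriction map on cohomology fits into a commutative square relating $\varphi_\g$ and $\varphi_{\subgrp{x}}$. Consequently, for $f \in S(\gone^*[p])^{(1)}$ with $f(x) \neq 0$, the class $\res_x(\varphi_\g(f))$ is a nonzero power of the canonical generator of $\opH^\bullet(\subgrp{x},k)$ modulo nilpotents. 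Non-freeness of $M|_{\subgrp{x}}$ means $\id_M$ survives multiplication by every power of this generator in $\Ext_{\subgrp{x}}^\bullet(M,M)$, so $\res_x(\varphi_\g(f) \cup \id_M) \neq 0$; hence $f \notin J_\g(M)$. Since this holds for every $f$ outside $\fm_x$, we conclude $J_\g(M) \subseteq \fm_x$, i.e., $x^{(1)} \in \Chi_\g(M)$.

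For the equality assertion, I would reduce from an arbitrary pair $(\g,M)$ to the natural module over $\glmm$ by two successive pullbacks. First, the representation map $\rho : \g \to \gl(M) = \gl(p|q)$, with $p = \dim M_{\zero}$ and $q = \dim M_{\one}$, makes both $\varphi_\g$ and the $\rho$-restriction on Ext-algebras natural, which should yield
\[
\Chi_\g(M) = (d\rho^{(1)})^{-1}\bigl(\Chi_{\gl(p|q)}(k^{p|q})\bigr), \qquad \Chi_\g'(M) = (d\rho^{(1)})^{-1}\bigl(\Chi_{\gl(p|q)}'(k^{p|q})\bigr),
\]
the second equality because restriction of $M$ to $\subgrp{x} \subset \g$ agrees with restriction of the natural module to $\subgrp{d\rho(x)} \subset \gl(p|q)$, so freeness on one side is equivalent to freeness on the other. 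Second, for $m = \max(p,q)$, the embedding $\gl(p|q) \hookrightarrow \glmm$ obtained by padding with a trivial block exhibits $\kmm$, on restriction, as a direct sum of $k^{p|q}$ and a trivial summand. Because both support varieties send direct sums of modules to unions of varieties and are trivial on trivial modules, an equality $\Chi_{\glmm}'(\kmm) = \Chi_{\glmm}(\kmm)$ transports to $\Chi_{\gl(p|q)}'(k^{p|q}) = \Chi_{\gl(p|q)}(k^{p|q})$, and hence through the first pullback to the desired equality for $(\g,M)$.

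The main obstacle will be verifying the naturality diagrams for both the cohomological and the rank varieties under the (possibly non-injective) map $\rho$, and in particular ensuring the Frobenius-twist identifications and the direct-summand reduction from $\gl(p|q)$ to $\glmm$ are handled consistently. Once those compatibilities are nailed down, both conclusions follow by essentially formal arguments, using the Koszul cocycle description of $\varphi$ to check commutativity at the cochain level.
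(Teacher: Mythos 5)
First, note that the paper does not actually prove Proposition \ref{prop:rankinclusion} in the text: it is quoted as a composite of Propositions 4.2.2 and 4.3.1 of \cite{Drupieski:2019a}, so your proposal is being measured against that source rather than an argument given here. Your first paragraph (the inclusion $\Chi_\g'(M) \subseteq \Chi_\g(M)$) is essentially correct and is the standard argument: restriction to $\subgrp{x}$ intertwines $\varphi_\g$ with $\varphi_{\subgrp{x}}$, a homogeneous $f$ with $f(x^{(1)}) \neq 0$ restricts to a nonzero multiple of a power of the polynomial generator $\zeta$ of $\opH^\bullet(\subgrp{x},k)$, and non-freeness of $M|_{\subgrp{x}}$ (equivalently, the presence of a trivial $\Lambda(x)$-summand) gives $\zeta^n \cup \id_M \neq 0$ for all $n$; hence $f \notin J_\g(M)$ and $J_\g(M) \subseteq \fm_x$.

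The second half has a genuine gap in the reduction from $\gl(p|q)$ to $\glmm$. Your claim that the support and rank varieties ``are trivial on trivial modules'' is false: for a nonzero trivial module $T$ one has $I_\g(T) = I_\g(k) = 0$, and $T|_{\subgrp{x}}$ is never free for $x \neq 0$, so both $\Chi_\g(T)$ and $\Chi_\g'(T)$ equal \emph{all} of $\Chi_\g(k)$. Consequently the decomposition $\kmm|_{\gl(p|q)} \cong k^{p|q} \oplus (\text{trivial})$ gives $\Chi(k^{p|q}) \cup \Chi_{\gl(p|q)}(k) = \Chi_{\gl(p|q)}(k)$ for both varieties, and no information about $k^{p|q}$ can be extracted: the padding step transports nothing. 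The correct reduction is to first replace $M$ by $M \oplus \Pi M$, the direct sum of $M$ with its parity shift; this changes neither variety (a parity shift does not affect freeness over $\Lambda(x)$, and both varieties send direct sums to unions) but equalizes the even and odd dimensions, so that $\rho$ lands directly in $\glmm$ with $m = \dim_k M$ and no padding is needed. A secondary point: for the cohomological variety you only have the inclusion $\Chi_\g(M) \subseteq (d\rho^{(1)})^{-1}\bigl(\Chi_{\glmm}(\kmm)\bigr) \cap \Chi_\g(k)$ a priori, via the factorization $\Phi_M \circ \rho^* = \res \circ \Phi_{\kmm}$; the reverse inclusion you assert is essentially the theorem being proved. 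Fortunately, that one inclusion, combined with the exact pullback of rank varieties and your first paragraph, is all that is needed to conclude.
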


Now suppose that $\g$ is a finite-dimensional restricted Lie superalgebra, with restricted enveloping algebra $V(\g) = U(\g)/\subgrp{x^p-x^{[p]}: x \in \gzero}$. By \cite[Remark 4.4.4]{Drupieski:2013c}, there exists a height-one infinitesimal supergroup scheme $G$ such that $kG \colonequals k[G]^* = V(\g)$. The module categories for $V(\g)$ and $G$ are equivalent, so henceforth we make the identifications $\Hbul(V(\g),k) = \Hbul(G,k)$ and $\abs{V(\g)}_M = \abs{G}_M$; cf.\ \cite[\S4.3 and Remark 5.1.1]{Drupieski:2013c}.

Fix a closed embedding $\iota: G \hookrightarrow \GLmnone$ of $G$ into the first Frobenius kernel of the general linear supergroup $\GLmn$, for some $m,n \in \N$. By abuse of notation, we also write $\iota: \g \rightarrow \glmn$ for the induced embedding of restricted Lie superalgebras. Then by \cite[Proposition 5.2.5]{Drupieski:2019a}, there exists a commutative diagram of graded superalgebra homomorphisms
	\begin{equation} \label{eq:phidiagram}
	\vcenter{\xymatrix{
	S(\glzero^*[2] \oplus \glone^*[p])^{(1)} \ar@{->}[r]^-{\phi} \ar@{->>}[d]^{\iota^*} & \Hbul(\GLmnone,k) \ar@{->}[d]^{\iota^*} \\
	S(\gzero^*[2] \oplus \gone^*[p])^{(1)} \ar@{->}[r]^-{\phi_G} & \Hbul(G,k).
	}}
	\end{equation}
Here $\phi$ is the map denoted $\phi_{GL_{m|n(1)}}$ in \cite{Drupieski:2019a}, and $\phi_G$ is the map obtained in \cite[Proposition 5.2.5]{Drupieski:2019a} from the factorization of $\iota^* \circ \phi$ through the restriction map $\iota^*: S(\glmn^*) \rightarrow S(\g^*)$.

Next let $\Pone = k[u,v]/\subgrp{u^p+v^2}$ be the Hopf superalgebra over $k$ generated by the even primitive element $u$ and the odd primitive element $v$ such that $uv = vu$ and $u^p + v^2 = 0$. Given $s \geq 1$, let $k\Mones = \Pone / \subgrp{u^{p^s}} = k[u,v]/\subgrp{u^{p^s},u^p+v^2}$, let $V_{1;s}(G) = [\bfHom(\Mones,G)]_{\ev}$ be the affine $k$-scheme defined in \cite[\S3.3]{Drupieski:2019b}, and let $\psi_{1;s}: H(G,k) \rightarrow k[V_{1;s}(G)]$ be the homomorphism of graded $k$-algebras defined in \cite[\S6.2]{Drupieski:2019b}. By \cite[Lemma 6.2.1]{Drupieski:2019b}, \eqref{eq:phidiagram} can be extended to the commutative diagram
	\begin{equation} \label{eq:phipsidiagram}
	\vcenter{\xymatrix{
	S(\glmn^*)^{(1)} \ar@{->}[r]^-{\phi} \ar@{->>}[d]^{\iota^*} & H(\GLmnone,k) \ar@{->}[d]^{\iota^*} \ar@{->}[r]^-{\psi_{1;s}} & k[V_{1;s}(\GLmnone)] \ar@{->>}[d]^{\iota^*} \\
	S(\g^*)^{(1)} \ar@{->}[r]^-{\phi_G} & H(G,k) \ar@{->}[r]^-{\psi_{1;s}} & k[V_{1;s}(G)],
	}}
	\end{equation}
in which the right-hand vertical arrow is a surjection by \cite[Theorem 3.3.6(2)]{Drupieski:2019b}. The corresponding diagram of varieties then has the form
	\begin{equation} \label{eq:phipsidiagramvarieties}
	\vcenter{\xymatrix{
	\glmn^{(1)} \ar@{<-}[r]^-{\phi^*} \ar@{<-}[d]^{\iota^{(1)}} & \abs{\GLmnone}  \ar@{<-}[d]^{\iota_*} \ar@{<-}[r]^-{\psi_{1;s}^*} & V_{1;s}(\GLmnone) \ar@{<-}[d]^{\iota_*} \\
	\g^{(1)} \ar@{<-}[r]^-{\phi_G^*} & \abs{G} \ar@{<-}[r]^-{\psi_{1;s}^*} & V_{1;s}(G),
	}}
	\end{equation}
A point $\nu$ of the variety $V_{1;s}(G)$ is a homomorphism of $k$-supergroup schemes $\nu: \Mones \rightarrow G$, or equivalently a Hopf superalgebra homomorphism $\nu: k\Mones \rightarrow kG = V(\g)$. Such a $\nu$ is specified by the even primitive element $\nu(u) \in V(\g)$ and the odd primitive element $\nu(v) \in V(\g)$. Then $\nu(u) \in \gzero$ and $\nu(v) \in \gone$ by \cite[Theorem 3.2.11]{Bahturin:1992}\footnote{As stated, the cited theorem imposes the additional assumption that $p > 3$. However, if one assumes, as we do, that the identity $[y,[y,y]] = 0$ holds in $\g$ for all $y \in \gone$, then the same argument also goes through in the case $p=3$; cf.\ Section 1.1.10 and Remarks 3.2.3 and 3.2.8 of \cite{Bahturin:1992}.}, so one gets an identification of varieties
	\begin{equation} \label{eq:V1sidentification}
	V_{1;s}(G) \cong \set{ (\alpha,\beta) \in \gzero \oplus \gone : \alpha^{[p^s]} = 0 \text{ and } \alpha^{[p]} + \tfrac{1}{2}[\beta,\beta] = 0},
	\end{equation}
and similarly for $V_{1;s}(\GLmnone)$. Then by \cite[Theorem 6.2.3]{Drupieski:2019b}, the composite morphism
	\[
	\phi^* \circ \psi_{1;s}^*: V_{1;s}(\GLmnone) \rightarrow \abs{\GLmnone} \rightarrow \glmn^{(1)}
	\]
identifies with the composition of the inclusion $V_{1;s}(\GLmnone) \subseteq \glmn$ and the map $\glmn \rightarrow \glmn^{(1)}$ defined by $z \mapsto z^{(1)}$.\footnote{There is a natural identification $\glmn^{(1)} \rightarrow \glmn$, defined by sending $\alpha^{(1)} = \alpha \otimes_{\varphi} k$ to the matrix obtained by raising each individual matrix entry of $\alpha$ to the $p$-th power; see \cite[Remark 5.1.6]{Drupieski:2019b}. Making this identification, the morphism $\phi^* \circ \psi_{1;s}^*: V_{1;s}(\GLmnone) \rightarrow \glmn$ then matches the exact description given in \cite[Theorem 6.2.3]{Drupieski:2019b}.} This implies by commutativity of \eqref{eq:phipsidiagramvarieties} that the morphism
	\[
	\phi_G^* \circ \psi_{1;s}^*: V_{1;s}(G) \rightarrow \g^{(1)}
	\]
identifies with the composition of the inclusion $V_{1;s}(G) \subseteq \g$ and the map $\g \rightarrow \g^{(1)}$, $z \mapsto z^{(1)}$.

\begin{lemma} \label{lemma:phiGappliedtosupport}
Suppose that $G$ is unipotent, so that the augmentation ideal $I$ of $kG$ is nilpotent, and assume that $I^{p^s} = 0$. Let $M$ be a finite-dimensional $G$-module. Given a point $(\alpha,\beta) \in V_{1;s}(G)$ as in \eqref{eq:V1sidentification}, let $\sigma_{(\alpha,\beta)}: \Pone \twoheadrightarrow k\Mones \rightarrow kG = V(\g)$ be the unique Hopf superalgebra homomorphism such that $\sigma_{(\alpha,\beta)}(u) = \alpha$ and $\sigma_{(\alpha,\beta)}(v) = \beta$, and let $\sigma_{(\alpha,\beta)}^*M$ be the $\Pone$-module obtained from $M$ by pulling back along $\sigma_{(\alpha,\beta)}$. Then
	\[
	\phi_G^*(\abs{G}_M)^{(-1)} = \set{ (\alpha,\beta) \in V_{1;s}(G) : \projdim_{\Pone}(\sigma_{(\alpha,\beta)}^* M) = \infty}.
	\]
\end{lemma}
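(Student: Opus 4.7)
The plan is to combine two ingredients: the commutative diagram \eqref{eq:phipsidiagramvarieties}, which pins down $\phi_G^* \circ \psi_{1;s}^*$ as the Frobenius twist of the inclusion $V_{1;s}(G) \hookrightarrow \g$, together with the detection theorem for support varieties of infinitesimal unipotent supergroup schemes proved in \cite{Drupieski:2019b}. Chaining the two will give the lemma.

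First I would use the unipotence assumption $I^{p^s} = 0$ to observe that every $\alpha \in \gzero$ satisfies $\alpha^{[p^s]} = \alpha^{p^s} = 0$ in $V(\g)$, so that the constraint $\alpha^{[p^s]} = 0$ in the description \eqref{eq:V1sidentification} of $V_{1;s}(G)$ is automatic; hence $V_{1;s}(G)$ coincides with the full restricted nullcone $\set{(\alpha,\beta) \in \gzero \oplus \gone : \alpha^{[p]} + \tfrac{1}{2}[\beta,\beta] = 0}$. By the restricted-super analogue of Theorem \ref{thm:homeomorphism} (coming from the setup preceding diagram \eqref{eq:phidiagram} in \cite{Drupieski:2019a}), the map $\phi_G^*$ restricts to a homeomorphism of $\abs{G}$ onto $V_{1;s}(G)^{(1)}$. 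Combining this with the identification of $\phi_G^* \circ \psi_{1;s}^*$ provided by \eqref{eq:phipsidiagramvarieties} forces $\psi_{1;s}^* \colon V_{1;s}(G) \to \abs{G}$ to also be a bijection of reduced varieties, and tracing identifications gives
	\[
	\phi_G^*(\abs{G}_M)^{(-1)} \;=\; (\psi_{1;s}^*)^{-1}(\abs{G}_M).
	\]

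Next I would invoke the detection theorem from \cite{Drupieski:2019b}: a point $(\alpha,\beta) \in V_{1;s}(G)$ lies in $(\psi_{1;s}^*)^{-1}(\abs{G}_M)$ if and only if the pullback module $\sigma_{(\alpha,\beta)}^* M$ has infinite projective dimension over $\Pone$. The homomorphism $\sigma_{(\alpha,\beta)}$ factors through $\Pone \twoheadrightarrow k\Mones$ because $\sigma_{(\alpha,\beta)}(u^{p^s}) = \alpha^{p^s} = \alpha^{[p^s]} = 0$ in $V(\g)$, so the relevant $\Pone$-module is well-defined through this factorization. Combining this characterization with the displayed equality above completes the argument. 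The principal obstacle is the detection theorem itself, which packages the substantive representation-theoretic content; once it is in hand, what remains is a diagram chase together with the unipotence reduction.
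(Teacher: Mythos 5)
Your proposal is correct and follows essentially the same route as the paper: reduce to the detection theorem for unipotent infinitesimal supergroup schemes and compose with the explicit description of $\phi_G^* \circ \psi_{1;s}^*$ from \eqref{eq:phipsidiagramvarieties}. The only adjustment needed is in sourcing: the bijectivity you want is most directly the statement that $\psi_1^* \colon V_1(G) \to \abs{G}$ is a homeomorphism for unipotent $G$ (\cite[Theorem 5.1.3]{Drupieski:2018}, after identifying $V_1(G)$ with $V_{1;s}(G)$ via \cite[Lemma 4.1.6]{Drupieski:2018}), and the detection statement identifying $(\psi_1^*)^{-1}(\abs{G}_M)$ with the infinite-projective-dimension locus is \cite[Lemma 5.2.2]{Drupieski:2018}, not a ``restricted-super analogue'' of Theorem \ref{thm:homeomorphism} from \cite{Drupieski:2019a} (whose setup only supplies the commutative diagrams, not the homeomorphism).
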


\begin{proof}
Let $V_1(G) = [\bfHom(\Pone,kG)]_{\ev}$ be the affine $k$-scheme defined in \cite[\S4.1]{Drupieski:2018}, whose $k$-points correspond to Hopf superalgebra homomorphisms $\nu: \Pone \rightarrow kG$. The assumptions on $G$ and $s$ imply by \cite[Lemma 4.1.6]{Drupieski:2018} that the quotient map $q: \Pone \twoheadrightarrow k\Mones$ induces an identification $k[V_1(G)] = k[V_{1;s}(G)]$. Via this identification, the homomorphism $\psi_1: H(G,k) \rightarrow k[V_1(G)]$ defined in \cite[\S4.2]{Drupieski:2018} identifies with the map $\psi_{1;s}: H(G,k) \rightarrow k[V_{1;s}(G)]$; see the proof of \cite[Lemma 4.2.7]{Drupieski:2018}. Then \eqref{eq:phipsidiagram} can be rewritten with $k[V_{1;s}(G)]$ replaced by $k[V_1(G)]$, and the corresponding diagram of varieties takes the form
	\begin{equation} \label{eq:V1Gdiagram}
	\vcenter{\xymatrix{
	\glmn^{(1)} \ar@{<-}[r]^-{\phi^*} \ar@{<-}[d]^{\iota^{(1)}} & \abs{\GLmnone}  \ar@{<-}[d]^{\iota_*} \ar@{<-}[r]^-{\psi_{1;s}^*} & V_{1;s}(\GLmnone) \ar@{<-}[d]^{\iota_*} \\
	\g^{(1)} \ar@{<-}[r]^-{\phi_G^*} & \abs{G} \ar@{<-}[r]^-{\psi_1^*} & V_1(G).
	}}
	\end{equation}
Now $\psi_1^*: V_1(G) \rightarrow \abs{G}$ is a homeomorphism of varieties by \cite[Theorem 5.1.3]{Drupieski:2018}, and by \cite[Lemma 5.2.2]{Drupieski:2018}, this homeomorphism maps the support set $V_1(G)_M$ defined in \cite[\S4.3]{Drupieski:2018} bijectively onto the support variety $\abs{G}_M$. Then by the definition of $V_1(G)_M$ and by the description of the composite morphism $\phi_G^* \circ \psi_{1;s}^* = \phi_G^* \circ \psi_1^*$ given prior to the lemma,
	\[
	\phi_G^*(\abs{G}_M)^{(-1)} = (\phi_G^* \circ \psi_1^*)(V_1(G)_M)^{(-1)} = \set{ (\alpha,\beta) \in V_{1;s}(G) : \projdim_{\Pone}(\sigma_{(\alpha,\beta)}^* M) = \infty}. \qedhere
	\]
\end{proof}

\subsection{Support varieties in the graded case} \label{subec:supportvargraded}

In this section we assume that $\g = \g_1 \oplus \g_2$ is a finite-dimensional $\Z$-graded Lie superalgebra such that $\g_2$ is central in $\g$, and we let $M = \bigoplus_{i \in \Z} M_i$ be a finite-dimensional $\Z$-graded $\g$-module. Since $M$ is finite-dimensional, it is concentrated in only finitely many integer degrees, say, $M_i \neq 0$ only if $\abs{i} < m$. One has $x.M_i \subseteq M_{i+2}$ for each $x \in \g_2$, so this implies that the action of the enveloping algebra $U(\g)$ on $M$ factors through the quotient $U(\g)/\subgrp{x^{p^m}: x \in \g_2}$. Let $\wtg$ be the restricted Lie superalgebra generated by the image of $\g$ in $U(\g)/\subgrp{x^{p^m}: x \in \g_2}$. Then $\wtg$ is a $\Z$-graded Lie superalgebra, and one has
	\[
	\wtg = \wtg_1 \oplus \wtg_2 \oplus \wtg_{2p} \oplus \cdots \oplus \wtg_{2p^{m-1}},
	\]
where $\wtg_1 = \g_1$, and $\wtg_{2p^i}$ is the subspace of $U(\g)/\subgrp{x^{p^m}: x \in \g_2}$ spanned by the monomials of the form $x^{p^i}$ for $x \in \g_2$. In particular, $\g = \wtg_1 \oplus \wtg_2$ as nonrestricted Lie superalgebras, and $\bigoplus_{i=1}^{m-1} \wtg_{2p^i}$ is a central ideal in $\wtg$. By the definition of $\wtg$, the $\g$-module structure on $M$ lifts to $\wtg$.

\begin{lemma} \label{lemma:supportvarietygradedcase}
Retain the assumptions and notation of the first paragraph of this section. Then
	\[
	\Chi_{\wtg}'(M) = \Chi_{\wtg}(M).
	\]
\end{lemma}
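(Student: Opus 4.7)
By Proposition~\ref{prop:rankinclusion} we already have $\Chi_{\wtg}'(M) \subseteq \Chi_{\wtg}(M)$, so only the reverse inclusion requires work. The plan is to leverage the restricted structure of $\wtg$ together with Lemma~\ref{lemma:phiGappliedtosupport}, which describes support varieties for height-one infinitesimal unipotent supergroup schemes.

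First I would observe that $\wtg_{\zero} = \wtg_{2} \oplus \wtg_{2p} \oplus \cdots \oplus \wtg_{2p^{m-1}}$ is an abelian central ideal on which the $p$-th power operation shifts $\wtg_{2p^i}$ into $\wtg_{2p^{i+1}}$ and kills $\wtg_{2p^{m-1}}$. Hence $\wtg$ is a $p$-nilpotent restricted Lie superalgebra, and $V(\wtg) = kG$ for a height-one infinitesimal unipotent supergroup scheme $G$; by construction $M$ is a restricted $\wtg$-module, hence a $G$-module. Choose $s$ large enough so that the augmentation ideal $I$ of $kG$ satisfies $I^{p^s} = 0$.

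Next I would exploit the $\Z$-grading: the relation $\alpha^{[p]} + \tfrac{1}{2}[\beta,\beta] = 0$ defining $V_{1;s}(G)$ is $\Z$-graded-homogeneous, and since $[\beta,\beta] \in \wtg_2$ lives in $\Z$-degree $2$ while $\alpha^{[p]}$ decomposes into components of $\Z$-degree $2p^{i+1} \geq 2p$, this relation splits degree-by-degree into the two independent relations $[\beta,\beta] = 0$ and $\alpha^{[p]} = 0$. The condition $\alpha^{[p^s]} = 0$ is then automatic, giving the explicit product description
\[
V_{1;s}(G) = \set{\alpha \in \wtg_{\zero} : \alpha^{[p]} = 0} \times \set{\beta \in \wtg_1 : [\beta,\beta] = 0},
\]
so that Lemma~\ref{lemma:phiGappliedtosupport} identifies $\phi_G^*(\abs{G}_M)^{(-1)}$ with the subset of this product on which $\sigma_{(\alpha,\beta)}^* M$ has infinite projective dimension over $\Pone$.

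Finally I would compare with $\Chi_{\wtg}(M)$ via inflation. The map $\varphi_{\wtg}$ factors as $\mathrm{infl} \circ \phi_G|_{S(\wtg_1^*[p])^{(1)}}$, where $\mathrm{infl}\colon \Hbul(G,k) \to \Hbul(\wtg,k)$ is induced by the Hopf surjection $U(\wtg) \twoheadrightarrow V(\wtg)$; this can be verified at the cochain level, analogously to the verification of~\eqref{eq:varphipi}. Since $\mathrm{infl}(I_G(M)) \subseteq I_{\wtg}(M)$, one deduces $\Chi_{\wtg}(M) \subseteq \overline{\pi(\phi_G^*(\abs{G}_M))}$, where $\pi\colon \wtg^{(1)} \to \wtg_1^{(1)}$ is the projection killing $\wtg_{\zero}^*$. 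I expect the main technical obstacle to be the last implication: if $\beta$ lies in the odd nullcone but outside the rank variety, so that $M|_{\subgrp{\beta}}$ is free over $\Lambda(\beta)$, then $\projdim_{\Pone}(\sigma_{(\alpha,\beta)}^* M)$ must be finite for every $\alpha \in \wtg_{\zero}$ with $\alpha^{[p]} = 0$. This should follow by factoring $\sigma_{(\alpha,\beta)}$ through the sub-Hopf superalgebra $k[u]/(u^p) \otimes \Lambda(\beta) \subseteq V(\wtg)$ and running a Dade-style argument that combines the freeness of $M$ over $\Lambda(\beta)$ with the product structure of this elementary super Hopf algebra.
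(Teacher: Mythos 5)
Your overall skeleton matches the paper's: pass to the restricted enveloping algebra $V(\wtg)=kG$ for a height-one infinitesimal unipotent $G$, relate $\Hbul(\wtg,k)$ to $\Hbul(G,k)$ via inflation, and invoke Lemma~\ref{lemma:phiGappliedtosupport}. But you take a wrong turn at the decisive step, and it creates a genuine gap. From the compatibility $\varphi_{\wtg}\circ\res = \pi_{\wtg}^*\circ\phi_G$ one should conclude, as the paper does, that $\res^*(\Chi_{\wtg}(M))\subseteq \phi_G^*(\abs{G}_M)$ --- that is, every point of $\Chi_{\wtg}(M)$ sits inside $\phi_G^*(\abs{G}_M)$ as a point of the specific form $(0,\beta)$, with even component \emph{zero}. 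You instead only record the weaker consequence $\Chi_{\wtg}(M)\subseteq \pi\bigl(\phi_G^*(\abs{G}_M)\bigr)$, where $\pi$ projects away the even coordinates. Having discarded the information that $\alpha=0$, you are then forced to prove that for every $\beta\neq 0$ with $M|_{\subgrp{\beta}}$ free, $\projdim_{\Pone}(\sigma_{(\alpha,\beta)}^*M)<\infty$ for \emph{all} admissible $\alpha$, whereas the paper only needs the case $\alpha=0$, which is precisely \cite[Lemma 4.3.3]{Drupieski:2018}.

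That stronger statement is exactly where your proposal is not a proof. Your suggested ``Dade-style argument'' via the subalgebra $k[u]/(u^p)\otimes\Lambda(\beta)$ does not work as stated: freeness of $M$ over $\Lambda(\beta)$ does not give freeness (or projectivity) over $k[\alpha]/(\alpha^p)\otimes\Lambda(\beta)$ --- take $M=\Lambda(\beta)$ with $\alpha$ acting by zero --- and projectivity over that tensor product is in any case the wrong target, since finite projective dimension over the infinite-dimensional hypersurface $\Pone$ is a strictly weaker condition (note $\projdim_{\Pone}(\Pone/(u))=1$ even though $\Pone/(u)$ is not projective over $k[u]/(u^p)\otimes\Lambda(v)$). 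The statement you need appears to be provable, but by a different mechanism (e.g., exhibiting $\sigma_{(\alpha,\beta)}^*M$ as the cokernel of a square matrix $uI-a-vb$ over the domain $\Pone$ and arguing that finite length forces this matrix to be injective, whence $\projdim\leq 1$); nothing of this sort is supplied, and it is entirely avoidable by keeping track of the even component as the paper does. Two smaller remarks: your product decomposition of $V_{1;s}(G)$ from the $\Z$-grading is correct but unnecessary; and the factorization of $\varphi_{\wtg}$ through inflation, which you propose to check on cochains, is obtained in the paper by a diagram chase around a cube of algebra maps whose other faces commute for cited reasons --- either route is acceptable there.
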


\begin{proof}
Let $V(\wtg)$ be the restricted enveloping algebra of $\wtg$. By \cite[Remark 4.4.3]{Drupieski:2013c}, there exists a height-one infinitesimal supergroup $G$ such that $kG \colonequals k[G]^* = V(\wtg)$. Then we can consider $M$ as a rational $G$-module, and cohomology for $V(\wtg)$ identifies with cohomology for $G$. Since the $p$-map on $\wtg$ is nilpotent, it follows that $G$ is unipotent. As in Section \ref{subsec:preliminariessupport}, fix a choice of closed embedding $\iota: G \hookrightarrow \GLmnone$, for some $m,n \in \N$.

Let $\pi = \pi_{\wtg}: U(\wtg) \twoheadrightarrow U(\wtg)/\subgrp{x^p-x^{[p]}: x \in \wtg_{\zero}} = V(\wtg)$ be the quotient homomorphism from the universal enveloping algebra of $\wtg$ to the restricted enveloping algebra of $\wtg$. Then one gets the following cube of algebra homomorphisms:
	\begin{equation} \label{eq:cube}
	\vcenter{\xymatrix{
	S(\glmn^*)^{(1)} \ar@{->}[rr]^-{\phi} \ar@{->>}[rd]^-{\res} \ar@{->>}[dd]^(.7){\iota^*} && \Hbul(\GLmnone,k) \ar@{->>}[dd]|-(.49){{\phantom{ll}}}^(.7){\iota^*} \ar@{->}[rd]^{\pi_{\glmn}^*} & \\
	& S(\glone^*)^{(1)} \ar@{->}[rr]^(.35){\varphi_{\glmn}} \ar@{->>}[dd]^(.7){\iota^*} && \Hbul(\glmn,k) \ar@{->}[dd]^(.7){\iota^*} \\
		S(\wtg^*)^{(1)} \ar@{->>}[rd]^-{\res} \ar@{->}[rr]|-(.49){{\phantom{X}}}^(0.6){\phi_G} && \Hbul(G,k) \ar@{->}[rd]^{\pi_{\wtg}^*} & \\
	& S(\wtg_{\one}^*)^{(1)} \ar@{->}[rr]^-{\varphi_{\wtg}} && \Hbul(\wtg,k).
	}}
	\end{equation}
The back face of the cube commutes by \eqref{eq:phidiagram}, and the front face commutes by the naturality of \eqref{eq:varphi}. The left face evidently commutes, and the right face commutes because the supergroup homomorphism $\iota: G \hookrightarrow \GLmnone$ differentiates to a map of restricted Lie superalgebras. The top face commutes by the left-hand square of \cite[(5.4.4)]{Drupieski:2019a}, and then a diagram chase using the surjectivity of the indicated maps implies that the bottom face commutes as well. Now commutativity of the bottom face implies that the following diagram of varieties also commutes:
	\begin{equation} \label{eq:pivarietydiagram}
	\vcenter{\xymatrix{
	\wtg^{(1)} \ar@{<-}[r]^-{\phi_G^*} \ar@{<-^)}[d]^{\res^*} & \abs{G} \ar@{<-}[d]^{\pi_*} \ar@{<-^)}[r] & \abs{G}_M \ar@{<-}[d]^{\pi_*} \\
	\wtg_{\one}^{(1)} \ar@{<-}[r]^-{\varphi_{\wtg}^*}_{\simeq} & \abs{U(\wtg)} \ar@{<-^)}[r] & \abs{U(\wtg)}_M,
	}}
	\end{equation}
The bottom left arrow of \eqref{eq:pivarietydiagram} is a homeomorphism by Theorem \ref{thm:homeomorphism}. Then
	\[
	\res^*(\Chi_\g(M)) = ({\res^*} \circ {\varphi_{\wtg}^*})(\abs{U(\wtg)}_M) = ({\phi_G^*} \circ {\pi_*})(\abs{U(\wtg)}_M) \subseteq \phi_G^*(\abs{G}_M).
	\]
This implies by Lemma \ref{lemma:phiGappliedtosupport} and \cite[Lemma 4.3.3]{Drupieski:2018} that
	\begin{align*}
	\Chi_{\wtg}(M)^{(-1)} = \res^*(\Chi_{\wtg}(M))^{(-1)} &\subseteq \set{ (0,\beta) \in V_{1;s}(G) : \projdim_{\Pone}(\sigma_{(0,\beta)}^* M) = \infty} \\
	&= \set{ \beta \in \wtg_{\one} : [\beta,\beta]=0 \text{ and } M|_{\subgrp{\beta}} \text{ is not free}}.
	\end{align*}
So $\Chi_{\wtg}(M) \subseteq \Chi_{\wtg}'(M)$. Since also $\Chi_{\wtg}'(M) \subseteq \Chi_{\wtg}(M)$ by \eqref{eq:rankinclusion}, then $\Chi_{\wtg}'(M) = \Chi_{\wtg}(M)$.
\end{proof}

\begin{lemma} \label{lemma:rankequalssupportgradedcase}
Retain the assumptions and notation of the first paragraph of this section. In particular, make the identification $\g_1 = \wtg_1$. Then
	\[
	\Chi_\g'(M) = \Chi_\g(M) = \Chi_{\wtg}(M).
	\]
\end{lemma}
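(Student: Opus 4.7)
My plan is to sandwich the desired equalities between the chain $\Chi_{\wtg}'(M) = \Chi_{\wtg}(M)$ already established in Lemma \ref{lemma:supportvarietygradedcase}. Two observations suffice: the rank variety depends only on the odd action, yielding $\Chi_\g'(M) = \Chi_{\wtg}'(M)$; and the inclusion $\g \hookrightarrow \wtg$ of Lie superalgebras (noted in the paragraph preceding the lemma, where $\g = \wtg_1 \oplus \wtg_2$ as non-restricted Lie superalgebras) induces a compatible restriction on cohomology forcing $\Chi_\g(M) \subseteq \Chi_{\wtg}(M)$.

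For the first observation, since $\g_1 = \wtg_1$ and the lifted $\wtg$-action on $M$ restricts along $\g \hookrightarrow \wtg$ back to the original $\g$-action, the subalgebra $\Lambda(x) \subseteq U(\g)$ coincides with the subalgebra $\Lambda(x) \subseteq U(\wtg)$ for any $x \in \g_1$, and freeness of $M|_{\subgrp{x}}$ is independent of which ambient enveloping algebra one uses. Hence by \eqref{eq:rankvarietydef}, $\Chi_\g'(M) = \Chi_{\wtg}'(M)$, which in turn equals $\Chi_{\wtg}(M)$ by Lemma \ref{lemma:supportvarietygradedcase}. Combined with the inclusion $\Chi_\g'(M) \subseteq \Chi_\g(M)$ from Proposition \ref{prop:rankinclusion}, it remains only to prove $\Chi_\g(M) \subseteq \Chi_{\wtg}(M)$.

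For the second observation, PBW for Lie superalgebras makes $U(\wtg)$ a free $U(\g)$-module, so restriction of scalars is exact and preserves projectives. One therefore obtains a restriction homomorphism $\res_M \colon \Ext_{\wtg}^\bullet(M,M) \to \Ext_\g^\bullet(M,M)$ that fixes $\id_M$ and is compatible with the cup-product action of the algebra restriction map $\res \colon \Hbul(\wtg,k) \to \Hbul(\g,k)$. At the cochain level, the map $\Lambda_s(\wtg^*) \to \Lambda_s(\g^*)$ dual to the inclusion $\g \hookrightarrow \wtg$ is the identity on the odd duals under $\wtg_1^* = \g_1^*$, which by the same reasoning as in \eqref{eq:whvarphidiagram} forces $\res \circ \varphi_{\wtg} = \varphi_\g$ after identifying $S(\wtg_1^*[p])^{(1)} = S(\g_1^*[p])^{(1)}$.

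Given this, if $f \in J_{\wtg}(M)$, so that $\varphi_{\wtg}(f) \cup \id_M = 0$ in $\Ext_{\wtg}^\bullet(M,M)$, then applying $\res_M$ yields $\varphi_\g(f) \cup \id_M = \res_M(\varphi_{\wtg}(f) \cup \id_M) = 0$, so $f \in J_\g(M)$. Thus $J_{\wtg}(M) \subseteq J_\g(M)$ and $\Chi_\g(M) \subseteq \Chi_{\wtg}(M)$. Chaining all the inclusions gives $\Chi_{\wtg}(M) = \Chi_{\wtg}'(M) = \Chi_\g'(M) \subseteq \Chi_\g(M) \subseteq \Chi_{\wtg}(M)$, forcing equality throughout. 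The main subtle point is the naturality of $\varphi$ with respect to the inclusion $\g \hookrightarrow \wtg$, but this is essentially formal from the cochain-level description.
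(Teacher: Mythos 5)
Your proof is correct and follows essentially the same route as the paper: identify $\Chi_\g'(M)$ with $\Chi_{\wtg}'(M)$ via $\g_1=\wtg_1$, use naturality of $\varphi$ with respect to the inclusion $\g\hookrightarrow\wtg$ (the paper records this as a commutative diagram linking $\varphi_{\wtg}$, $\varphi_{\g}$, and $\Phi_M$) to get $\Chi_\g(M)\subseteq\Chi_{\wtg}(M)$, and then chain the inclusions against Proposition \ref{prop:rankinclusion} and Lemma \ref{lemma:supportvarietygradedcase}. The extra remarks about PBW freeness of $U(\wtg)$ over $U(\g)$ are a harmless elaboration of how the restriction map on $\Ext$ is constructed.
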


\begin{proof}
Naturality of \eqref{eq:varphi} implies commutativity of the diagram
	\begin{equation}
	\vcenter{\xymatrix{
	S(\wtg_1^*[p])^{(1)} \ar@{->}[r]^-{\varphi_{\wtg}} \ar@{=}[d]^{\iota^*} & \Hbul(\wtg,k) \ar@{->}[d]^{\iota^*} \ar@{->}[r]^-{\Phi_M} & \Ext_{\wtg}^\bullet(M,M) \ar@{->}[d]^{\iota^*} \\
	S(\g_1^*[p])^{(1)} \ar@{->}[r]^-{\varphi_\g} & \Hbul(\g,k) \ar@{->}[r]^-{\Phi_M} & \Ext_\g^\bullet(M,M),
	}}
	\end{equation}
which in turn implies that $\Chi_\g(M) \subseteq \Chi_{\wtg}(M)$. Then by \eqref{eq:rankinclusion} and Lemma \ref{lemma:supportvarietygradedcase}, we get
	\[
	\Chi_\g'(M) \subseteq \Chi_\g(M) \subseteq \Chi_{\wtg}(M) = \Chi_{\wtg}'(M).
	\]
But $\g_1 = \wtg_1$, so $\Chi_\g'(M) = \Chi_{\wtg}'(M)$, and hence $\Chi_\g'(M) = \Chi_\g(M) = \Chi_{\wtg}(M)$.
\end{proof}
	
\subsection{Support varieties in the general case} \label{subsec:supportvarungraded}

Our goal in this section is to show that the inclusion $\Chi_\g'(M) \subseteq \Chi_\g(M)$ of Proposition \ref{prop:rankinclusion} is an equality.

\begin{theorem}
For each positive integer $m$, one has
	\[
	\Chi_{\glmm}'(\kmm) = \Chi_{\glmm}(\kmm).
	\]
\end{theorem}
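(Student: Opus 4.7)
By Proposition~\ref{prop:rankinclusion} the inclusion $\Chi_{\glmm}'(\kmm) \subseteq \Chi_{\glmm}(\kmm)$ is already known, so the task is to establish the reverse inclusion. Set $\g = \glmm$ and $M = \kmm$. Fix any finite $U(\g)$-generating set for $M$ (a single vector suffices, since $\glmm$ acts transitively on $\kmm \setminus \{0\}$) and equip $M$ with the associated standard filtration $F^\bullet M$ from \eqref{eq:modulefiltration}. Let $\wtg = \wtg_2 \oplus \wtg_1$ be the Clifford associated graded Lie superalgebra of $\g$ and $\wtM = \gr(M)$ the associated graded $\wtg$-module, which is finite-dimensional and $\Z$-graded. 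Throughout, we make the canonical identifications $\gone = \wtg_1$ and $S(\gone^*[p])^{(1)} = S(\wtg_1^*[p])^{(1)}$, so that $\Chi_\g(M)$, $\Chi_\g'(M)$, $\Chi_{\wtg}(\wtM)$, and $\Chi_{\wtg}'(\wtM)$ may all be compared as subsets of the common ambient variety.

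The first step is to establish $\Chi_\g(M) \subseteq \Chi_{\wtg}(\wtM)$. Given a homogeneous $f \in J_{\wtg}(\wtM)$, one has $\varphi_{\wtg}(f) \cup \id_{\wtM} = 0$ in $\Ext_{\wtg}^\bullet(\wtM,\wtM)$. By Lemma~\ref{lemma:powerannihilates}, $\varphi_\g(f^\ell) \cup \id_M = 0$ for all $\ell > L(M,M)$, so $f^\ell \in J_\g(M)$ and hence $f \in \sqrt{J_\g(M)}$. This gives $\sqrt{J_{\wtg}(\wtM)} \subseteq \sqrt{J_\g(M)}$, which, after taking maximal ideal spectra, yields the desired inclusion of varieties.

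The second step is immediate: since $\wtg_2$ is central in $\wtg$ and $\wtM$ is a finite-dimensional $\Z$-graded $\wtg$-module, Lemma~\ref{lemma:rankequalssupportgradedcase} applies to give $\Chi_{\wtg}(\wtM) = \Chi_{\wtg}'(\wtM)$.

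The third and most delicate step is to show $\Chi_\g'(M) = \Chi_{\wtg}'(\wtM)$ as subsets of $\gone$. The plan is to prove that for each $x \in \gone = \wtg_1$ with $[x,x] = 0$, the restriction $M|_{\subgrp{x}}$ is free over $\Lambda(x)$ if and only if $\wtM|_{\subgrp{x}}$ is. The filtration $F^\bullet M$ makes $M$ into a filtered $\Lambda(x)$-module, with associated graded equal to $\wtM$ as a $\Lambda(x)$-module (because $x \in F^1 \g$ acts with degree $+1$ on both). Recall the criterion: a finite-dimensional $\Lambda(x)$-module $V$ is free if and only if $\dim_k V^x = \dim_k xV$. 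The filtrations on $V^x := V^x \cap F^\bullet V$ and $xV$ induce injections $\gr(V^x) \hookrightarrow (\gr V)^x$ and $\gr(xV) \hookrightarrow x(\gr V)$, and the dimension count
\[
\dim V = \dim V^x + \dim xV \leq \dim (\gr V)^x + \dim x(\gr V) = \dim \gr V = \dim V
\]
forces equality throughout, so $\dim V^x = \dim (\gr V)^x$ and $\dim xV = \dim x(\gr V)$. Thus freeness of $V = M|_{\subgrp{x}}$ is equivalent to freeness of $\gr V = \wtM|_{\subgrp{x}}$, and the two rank varieties coincide.

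Chaining the three steps gives
\[
\Chi_\g'(M) \subseteq \Chi_\g(M) \subseteq \Chi_{\wtg}(\wtM) = \Chi_{\wtg}'(\wtM) = \Chi_\g'(M),
\]
so equality holds throughout. The main obstacle is the filtered-versus-graded comparison of freeness in Step~3; once the dimension inequalities are shown to collapse to equalities, the remainder is bookkeeping with the already-developed machinery.
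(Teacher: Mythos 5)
Your Steps 1 and 2 are sound and are essentially the paper's own machinery: Lemma~\ref{lemma:powerannihilates} gives $\Chi_\g(M) \subseteq \Chi_{\wtg}(\wtM)$ after passing to radicals, and Lemma~\ref{lemma:rankequalssupportgradedcase} handles the associated graded situation. The gap is in Step~3, and it is fatal as stated. For a filtered degree-$(+1)$ endomorphism $x$ of a filtered space $V$, the two canonical maps go in \emph{opposite} directions: $\gr(V^x) \hookrightarrow (\gr V)^x$ but $x(\gr V) \hookrightarrow \gr(xV)$, i.e., $\dim V^x \leq \dim (\gr V)^x$ while $\dim xV \geq \dim x(\gr V)$. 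Since $\dim V^x + \dim xV = \dim(\gr V)^x + \dim x(\gr V) = \dim V$ holds automatically, your displayed chain is consistent for \emph{every} filtration and forces no equalities. What survives is one implication only: if $\wtM|_{\subgrp{x}}$ is free then $M|_{\subgrp{x}}$ is free (then $\dim V^x \leq \tfrac{1}{2}\dim V$, and $xV \subseteq V^x$ forces equality). The converse---which is the inclusion $\Chi_{\wtg}'(\wtM) \subseteq \Chi_\g'(M)$ you need to close your chain---fails. Already for $m=1$ with $M = k^{1|1}$ generated by $e_1$, one has $\wtM_0 = ke_1$, $\wtM_1 = ke_2$, and for $x = \sm{0&1\\0&0}$ (a maximal-orbit point on which $M$ \emph{is} free) one gets $xe_2 = e_1 \in F^0M$, so $x$ acts as zero on $\wtM$ and $\wtM|_{\subgrp{x}}$ is \emph{not} free. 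Thus for a single fixed filtration $\Chi_{\wtg}'(\wtM)$ is in general strictly larger than $\Chi_\g'(M)$.

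The paper's proof repairs exactly this point. All three sets $\Chi_\g(k)$, $\Chi_\g(M)$, $\Chi_\g'(M)$ are invariant under the conjugation action of $G_0 = GL_m(k) \times GL_m(k)$, whose orbits on the odd nullcone are represented by the matrices $x_{r,s}$, and one checks that $\Chi_\g'(\kmm)$ is precisely the union of the non-maximal orbits. Hence it suffices to exclude each maximal representative $x_{r,m-r}$ from $\Chi_\g(\kmm)$ one at a time, and for each $r$ the paper chooses a \emph{different} generating set $S$ (hence a different standard filtration on $M$), adapted to $x_{r,m-r}$, so that the resulting $\wtM|_{\subgrp{x_{r,m-r}}}$ \emph{is} free; your Steps 1 and 2 then exclude that single point. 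If you add the orbit analysis and make the filtration depend on the point being excluded, your argument becomes the paper's.
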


\begin{proof}
Let $m$ be a positive integer, let $\g = \glmm$, and let $M = \kmm$ be the natural representation of $\g$. Recall from \cite[\S\S4.2--4.3]{Drupieski:2019a} that the group $G_0 = GL_m(k) \times GL_m(k)$ acts on $\g$ by conjugation, leaving each of $\Chi_\g(k)$, $\Chi_\g(M)$, and $\Chi_\g'(M)$ invariant. Given integers $0 \leq r,s \leq m$, define $x_{r,s} \in \gone$ as in Figure \ref{fig:xmatrix}. Then $\set{x_{r,s}: r+s \leq m}$ is a set of $G_0$-orbit representatives in $\Chi_\g(k)^{(-1)}$, and the orbit closure relations are given by $G_0 \cdot x_{r',s'} \subseteq \ol{G_0 \cdot x_{r,s}}$ if and only if $r' \leq r$ and $s' \leq s$. In particular, the maximal orbits are of the form $G_0 \cdot x_{r,m-r}$ for $0 \leq r \leq m$. By inspection, $\Chi_\g'(M)$ consists of the non-maximal orbits in $\Chi_\g(k)$. Then to show that $\Chi_\g'(M) = \Chi_\g(M)$, it suffices to show for each $0 \leq r \leq m$ that $x_{r,m-r} \notin \Chi_\g(M)^{(-1)}$. Equivalently in the notation of Section \ref{subsec:LSAcohomology}, it suffices to show that there exists a polynomial $f \in S(\gone^*[p])$ such that $\varphi(f^{(1)}) \in I_\g(M)$ but $f(x_{r,m-r}) \neq 0$.

\begin{figure}[htbp]
\scalebox{0.8}{
$\left[
\begin{array}{cccccc|cccccc}
	&	&	&	&	&	& 1	&	&	&	&	&	\\
	&	&	&	&	&	&	& \ddots	&	&	&	& \\
	&	&	&	&	&	&	&	& 1	&	&	& 	\\
	&	&	&	&	&	&	&	&	& 0	&	& 	\\
	&	&	&	&	&	&	&	&	&	& \ddots	& 	\\	
	&	&	&	&	&	&	&	&	&	&	& 0	\\
\hline
0	&	&	&	&	&	&	&	&	&	&	&	\\
	& \ddots	&	&	&	&	&	&	&	&	&	&	\\
	&	& 0	&	&	&	&	&	&	&	&	&	\\
	&	&	& 1	&	&	&	&	&	&	&	&	\\
	&	&	&	& \ddots	&	&	&	&	&	&	&	\\
	&	&	&	&	& 1	&	&	&	&	&	&
\end{array}
\right]
$
}
\caption{The matrix $x_{r,s}$, whose first $r$ diagonal entries in the upper-right $m \times m$ block are equal to $1$, whose last $s$ diagonal entries in the lower-left $m \times m$ block are equal to $1$, and whose remaining entries are $0$.} \label{fig:xmatrix}
\end{figure}

Let $\set{e_1,\ldots,e_{2m}}$ be the standard homogeneous basis for $M$, with $\ol{e_i} = \zero$ if $1 \leq i \leq m$, and $\ol{e_i} = \one$ if $m \leq i \leq 2m$. Fix an integer $0 \leq r \leq m$, and let
	\[
	S = \set{ e_{m+1},\ldots,e_{m+r},e_{r+1},\ldots,e_m}.
	\]
The matrix $x_{r,m-r}$ maps $S$ to the set
	\[
	x_{r,m-r}.S = \set{e_1,\ldots,e_r,e_{m+r+1},\ldots,e_{m+m}},
	\]
so $S$ is a generating set for $M$ as a $\g$-module. Let $F^\bullet M$ be the $\g$-module filtration on $M$ determined by $S$ as in \eqref{eq:modulefiltration}. Then $F^0M = \Span_k(S)$ and $F^1M = M$, so it follows that the associated graded module $\wtM$ is nonzero only in $\Z$-degrees $0$ and $1$, and one gets canonical identifications $\wtM_0 = \Span_k(S)$ and $\wtM_1 = \Span_k(x_{r,m-r}.S)$. Then identifying $x_{r,m-r}$ with an element of $\wtg_1$, the restricted module $\wtM|_{\subgrp{x_{r,m-r}}}$ is free. Since $\wtM|_{\subgrp{x_{r,m-r}}}$ is free, we get by Lemma \ref{lemma:rankequalssupportgradedcase} that $x_{r,m-r} \notin \Chi_{\wtg}(\wtM)^{(-1)}$. Then there exists $f \in S(\wtg_1^*[p])$ such that $\varphi_{\wtg}(f^{(1)}) \in I_{\wtg}(M)$ but $f(x_{r,m-r}) \neq 0$. Identifying $f$ with an element of $S(\gone^*[p])$, this implies by Lemma \ref{lemma:powerannihilates} that $\varphi_\g((f^\ell)^{(1)}) \in I_\g(M)$ for some $\ell \in \N$. Since $f^\ell(x_{r,m-r}) = (f(x_{r,m-r}))^\ell \neq 0$, then $x_{r,m-r} \notin \Chi_\g(M)^{(-1)}$, as desired.
\end{proof}

\begin{corollary} \label{cor:rank=support}
Let $\g$ be a finite-dimensional Lie superalgebra over $k$, and let $M$ be a finite-dimensional $\g$-supermodule. Then the inclusion \eqref{eq:rankinclusion} is an equality:
	\[
	\Chi_\g'(M) = \Chi_\g(M).
	\]
\end{corollary}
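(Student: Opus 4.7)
The plan is to derive the corollary as an immediate consequence of the preceding theorem together with the reduction statement in Proposition \ref{prop:rankinclusion}. Proposition \ref{prop:rankinclusion} tells us that the inclusion $\Chi_{\g}'(M) \subseteq \Chi_{\g}(M)$ of \eqref{eq:rankinclusion} is an equality for arbitrary $\g$ and arbitrary finite-dimensional $M$ provided the single family of equalities $\Chi_{\glmm}'(\kmm) = \Chi_{\glmm}(\kmm)$ holds for every $m \in \N$. So the only task is to confirm that this hypothesis is satisfied.

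For positive integers $m$, the equality $\Chi_{\glmm}'(\kmm) = \Chi_{\glmm}(\kmm)$ is precisely the content of the theorem proved just above. The remaining case $m = 0$ is trivial: $\gl(0|0)$ is the zero Lie superalgebra and $\kmm = 0$, so both $\Chi_{\glmm}'(\kmm)$ and $\Chi_{\glmm}(\kmm)$ collapse to the single point $\{0\}$. Hence the hypothesis of Proposition \ref{prop:rankinclusion} is verified for all $m \in \N$.

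Putting these observations together, the proof is a one-line invocation: combine the preceding theorem (together with the trivial case $m=0$) with Proposition \ref{prop:rankinclusion} to conclude that $\Chi_\g'(M) = \Chi_\g(M)$ for every finite-dimensional Lie superalgebra $\g$ over $k$ and every finite-dimensional $\g$-supermodule $M$. There is no real obstacle at this step; all of the substantial work has been carried out in Sections \ref{subec:supportvargraded} and \ref{subsec:supportvarungraded} (especially in reducing to the natural module for $\glmm$ and in exploiting the Clifford-filtration spectral sequence via Lemmas \ref{lemma:rankequalssupportgradedcase} and \ref{lemma:powerannihilates}) and in our previous reduction to the general linear case recorded in Proposition \ref{prop:rankinclusion}.
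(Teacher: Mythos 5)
Your proposal is correct and matches the paper's (implicit) argument exactly: the corollary follows immediately from the preceding theorem together with Proposition \ref{prop:rankinclusion}, and your handling of the trivial $m=0$ case is a reasonable bit of extra care. Nothing further is needed.
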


\subsection{Applications} \label{subsec:applications}

Our first application is the tensor product property alluded to in the paper's introduction.

\begin{proposition}[Tensor Product Property] \label{cor:tensorproductproperty}
Let $\g$ be a finite-dimensional Lie superalgebra over $k$, and let $M$ and $N$ be finite-dimensional $\g$-supermodules. Then
	\[
	\Chi_\g(M \otimes N) = \Chi_\g(M) \cap \Chi_\g(N).
	\]
\end{proposition}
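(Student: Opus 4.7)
The plan is to leverage Corollary \ref{cor:rank=support}, which identifies $\Chi_\g(L)$ with the rank variety $\Chi_\g'(L)$ for every finite-dimensional $\g$-supermodule $L$. Applying this to $M$, $N$, and $M \otimes N$, the desired identity becomes
	\[
	\Chi_\g'(M \otimes N) = \Chi_\g'(M) \cap \Chi_\g'(N),
	\]
and since $0$ lies in all three sets by \eqref{eq:rankvarietydef}, it reduces to the pointwise claim: for every nonzero $x \in \gone$ with $[x,x]=0$, the $\Lambda(x)$-supermodule $(M \otimes N)|_{\subgrp{x}}$ is free if and only if at least one of $M|_{\subgrp{x}}$, $N|_{\subgrp{x}}$ is free.

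First I would fix such an $x$ and recall that $\Lambda(x) \cong U(\subgrp{x})$ is a two-dimensional Frobenius Hopf superalgebra generated by the odd primitive element $x$ with $x^2 = 0$. Up to parity shift, its only indecomposable supermodules are the trivial module $k$ and the regular (free) module $\Lambda(x)$, and every finite-dimensional $\Lambda(x)$-supermodule decomposes as a direct sum of copies of these; such a decomposition is free precisely when no trivial summand appears. The diagonal action of $x$ on $M \otimes N$ inherited from $U(\g)$ is the same as the diagonal action supplied by the Hopf structure on $\Lambda(x)$, since $x$ is primitive, so the restriction of the tensor product over $\g$ agrees with the tensor product over the Hopf superalgebra $\Lambda(x)$.

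The forward implication is the standard Hopf-algebraic untwisting: if $M|_{\subgrp{x}}$ is free, then for any $\Lambda(x)$-supermodule $V$ the map $h \otimes v \mapsto \sum \pm h_{(1)} \otimes S(h_{(2)}).v$ (with the usual Koszul signs) provides an isomorphism of $\Lambda(x)$-supermodules between $\Lambda(x) \otimes V$ with diagonal action and $\Lambda(x) \otimes V^{\text{triv}}$ with action only on the left tensorand, and the latter is manifestly free. Tensoring copies of this isomorphism shows $M|_{\subgrp{x}} \otimes N|_{\subgrp{x}}$ is free, and symmetrically if $N|_{\subgrp{x}}$ is free. For the reverse implication, if neither $M|_{\subgrp{x}}$ nor $N|_{\subgrp{x}}$ is free, then by the classification above each admits a trivial direct summand $k$; consequently $(M \otimes N)|_{\subgrp{x}}$ has $k \otimes k \cong k$ as a direct summand and is not free.

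There is no substantive obstacle here: the whole argument reduces to elementary facts about modules over $\Lambda(x)$ once the equality of cohomological and rank varieties is in hand. The only mildly delicate point is the bookkeeping of Koszul signs in the Hopf-algebraic untwisting isomorphism, but this is routine and well known.
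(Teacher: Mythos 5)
Your proof is correct and follows essentially the same route as the paper: the paper's own proof is a one-line reduction via Corollary \ref{cor:rank=support} to the tensor product property for rank varieties, citing \cite[Corollary 3.2.4]{Drupieski:2019a} for the elementary $\Lambda(x)$-module argument whose details you have written out (the Hopf-superalgebra untwisting for one direction, Krull--Schmidt and the existence of trivial summands for the other). Your write-up simply supplies the details the paper outsources, and they check out.
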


\begin{proof}
Using Corollary \ref{cor:rank=support}, this can be deduced in the same manner as \cite[Corollary 3.2.4]{Drupieski:2019a}. 
\end{proof}

The next theorem is a positive characteristic analogue of \cite[Theorem 3.4]{Duflo:2005}.

\begin{theorem} \label{theorem:0varietyimpliesfinprojdim}
Let $\g$ be a finite-dimensional Lie superalgebra over an algebraically closed field $k$ of characteristic $p \geq 3$. Let $M$ be a finite-dimensional $\g$-supermodule, and suppose that
	\[
	\set{ x \in \gone : [x,x] = 0 \text{ and } M|_{\subgrp{x}} \text{ is not free}} = \emptyset.
	\]
Then $\projdim_{U(\g)}(M) < \infty$.
\end{theorem}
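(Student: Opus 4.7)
The plan is to convert the hypothesis into vanishing of the cohomological support variety, extract finite-dimensionality of $\Ext_\g^\bullet(M,M)$, and then invoke the appendix to conclude that $M$ has finite projective dimension.

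First, by Corollary~\ref{cor:rank=support}, the hypothesis is precisely the statement that $\Chi_\g'(M) = \Chi_\g(M) = \set{0}$. Equivalently, the radical of the ideal $J_\g(M) \subseteq S(\gone^*[p])^{(1)}$ is the augmentation ideal, and the quotient $S(\gone^*[p])^{(1)}/J_\g(M)$ is a finite-dimensional $k$-algebra. Now by Proposition~\ref{prop:EMNfinitenessresults}(\ref{item:Extfinite}), the graded $k$-space $\Ext_\g^\bullet(M,M)$ is finitely generated as a module over $\Hbul(\g,k)$ under the cup product, and by the discussion following \eqref{eq:varphi}, $\Hbul(\g,k)$ is itself a finite module over the image of $\varphi_\g$. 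Hence $\Ext_\g^\bullet(M,M)$ is a finitely generated module over $S(\gone^*[p])^{(1)}$, and since the action factors through the finite-dimensional quotient $S(\gone^*[p])^{(1)}/J_\g(M)$, we conclude that $\Ext_\g^\bullet(M,M)$ is a finite-dimensional $k$-vector space. In particular, $\Ext_\g^n(M,M) = 0$ for all sufficiently large $n$.

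To finish, I would pass to the ordinary (non-super) Hopf algebra $A = U(\g) \# k\Z_2$, which by Lemma~\ref{lem:smashproductONoether} is a Noether $\calO$-algebra in the sense of the appendix. Under the equivalence between $\g$-supermodules and $A$-modules recalled in Section~\ref{subsec:homdimenveloping}, one has $\Ext_A^\bullet(M,M) = \Ext_\g^\bullet(M,M)_{\zero}$, which inherits the vanishing in large degrees established above, and the finite-dimensional hypotheses on $M$ over $\g$ transfer to finite generation of $M$ over $A$. Applying Theorem~\ref{thm:equivprojdimfinite} from the appendix to $A$ and $M$ should then yield $\projdim_A(M) < \infty$. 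Because $A$-projectives coincide with $U(\g)$-projectives under the equivalence, a finite $A$-projective resolution of $M$ restricts to a finite $U(\g)$-projective resolution, so $\projdim_{U(\g)}(M) < \infty$.

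The main obstacle is the last step: verifying that the appendix's criterion for finite projective dimension over a Noether algebra accepts vanishing of the self-$\Ext$ groups $\Ext_A^n(M,M)$ as input. If the criterion is instead stated in terms of $\Ext_A^n(M, -)$ for a broader class of test modules, one can either re-run the argument of the previous paragraph with $N$ in place of $M$ via Proposition~\ref{prop:EMNfinitenessresults}(\ref{item:Extfinite}), or use the Gorenstein property of $U(\g)$ established in Section~\ref{subsec:homdimenveloping} to trade projective for injective dimension and test only against $A$ itself.
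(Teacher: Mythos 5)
Your first two steps reproduce the paper's opening moves exactly: Corollary~\ref{cor:rank=support} gives $\Chi_\g(M)=\set{0}$, and since $\Ext_\g^\bullet(M,M)$ is finite over the image of $\Phi_M\circ\varphi_\g$, whose quotient by $J_\g(M)$ is finite-dimensional, one gets $\Ext_\g^i(M,M)=0$ for $i>n$. That part is correct. The gap is in the last step, and you have correctly located it: Theorem~\ref{thm:equivprojdimfinite}\eqref{item:Extvanish} demands $\Ext_A^i(M,N)=0$ for $i\gg 0$ for \emph{all} finite $A$-modules $N$, not just $N=M$. Your first fallback (``re-run the argument with $N$ in place of $M$'') is the right idea but omits the one fact that makes it work. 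For a general finitely generated $N$ there is no reason for $\Chi_\g(N)$ to be trivial, so you cannot argue that the $S(\gone^*[p])^{(1)}$-action on $\Ext_\g^\bullet(M,N)$ factors through a finite-dimensional quotient on the $N$-side; the vanishing must be forced from the $M$-side. The missing ingredient is that the \emph{right} cup product action of $\Hbul(\g,k)$ on $\Ext_\g^\bullet(M,N)$ factors through $\Phi_M:\Hbul(\g,k)\to\Ext_\g^\bullet(M,M)$ (via Yoneda composition with $\alpha\cup\id_M$), and the left and right actions differ only by signs. Combined with Proposition~\ref{prop:EMNfinitenessresults}\eqref{item:Extfinite}, which places the finitely many module generators of $\Ext_\g^\bullet(M,N)$ in bounded cohomological degree, the vanishing $\Ext_\g^i(M,M)=0$ for $i>n$ then yields $\Ext_\g^i(M,N)=0$ for $i\gg 0$. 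This is precisely how the paper closes the argument.

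Your second fallback does not work. Over a Gorenstein algebra, $\Ext_A^i(M,A)=0$ for $i\gg 0$ holds for \emph{every} finite module $M$ (since $A$ has finite self-injective dimension), so testing only against $A$ cannot detect finite projective dimension --- a finite-dimensional self-injective algebra already gives a counterexample. Moreover, trading projective for injective dimension via the Gorenstein lemma of Section~\ref{subsec:homdimenveloping} would require controlling $\Ext_A^i(N,M)$ with $M$ in the second slot, which is not the Ext group you have bounded. So the Yoneda-factorization step is genuinely needed and cannot be routed around in either of the ways you suggest.
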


\begin{proof}
By the hypothesis and Corollary \ref{cor:rank=support}, $\Chi_\g(M) = \set{0}$. Since $\Ext_{\g}^\bullet(M,M)$ is finite over the image of the map $\Phi_M \circ \varphi_{\g}: S(\gone^*[p])^{(1)} \rightarrow \Hbul(\g,k) \rightarrow \Ext_\g^\bullet(M,M)$, this implies that $\Ext_{\g}^\bullet(M,M)$ is finite-dimensional. In particular, there exists $n \in \N$ such that $\Ext_\g^i(M,M) = 0$ for $i > n$.

Now let $N$ be a finitely-generated $U(\g)$-module. By Proposition \ref{prop:EMNfinitenessresults}\eqref{item:Extfinite}, $\Ext_\g^\bullet(M,N)$ is finite under the cup product action of $\Hbul(\g,k)$. The left and right cup product actions differ only by signs, and the right cup product action factors through the map $\Phi_M: \Hbul(\g,k) \rightarrow \Ext_\g^\bullet(M,M)$; see \cite[Proposition 2.3.5]{Drupieski:2019a}. Since $\Ext_\g^i(M,M) = 0$ for $i > n$, this implies that $\Ext_\g^i(M,N) = 0$ for $i \gg 0$. Then Theorem \ref{thm:equivprojdimfinite} and the discussion of Section \ref{subsec:homdimenveloping} implies that $\projdim_{U(\g)}(M) < \infty$.
\end{proof}

The following result was proved for finite-dimensional positively-graded Lie super\-algebras by B{\o}gvad \cite{Bo-gvad:1984}, and was proved for arbitrary finite-dimensional Lie superalgebras in char\-acteristic zero by Musson \cite[Theorem 17.1.2]{Musson:2012}.

\begin{corollary} \label{C:finiteglobaldimension}
Let $\g$ be a finite-dimensional Lie superalgebra over a field $k$ of characteristic $p \geq 3$. Let $K = \ol{k}$ be the algebraic closure of $k$, and suppose that $\g_K \colonequals \g \otimes_k K$ is torsion free, i.e., if $x \in (\g_K)_{\one} = (\gone)_K$ and $[x,x]=0$, then $x=0$. Then $U(\g)$ has finite global dimension.
\end{corollary}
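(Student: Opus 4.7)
The plan is to reduce to the algebraically closed case, apply Theorem~\ref{theorem:0varietyimpliesfinprojdim} to the trivial supermodule in order to bound $\projdim_{U(\g)}(k)$, and then bootstrap this into a bound on $\gldim U(\g)$ by exploiting the Hopf superalgebra structure of $U(\g)$.

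For the reduction to $k = \ol{k}$, I would use that $U(\g_K) \cong U(\g) \otimes_k K$ is faithfully flat over the noetherian algebra $U(\g)$ (Lemma~\ref{lem:smashproductONoether}). Faithfully flat base change preserves the projective dimension of a finitely generated module, and for a (left) noetherian ring the global dimension equals the supremum of the projective dimensions of its finitely generated modules. Together these give $\gldim U(\g) \leq \gldim U(\g_K)$, so it suffices to treat the algebraically closed case. Accordingly, I assume $k = \ol{k}$ and $\g$ itself is torsion free.

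With these hypotheses, $\Chi_\g(k)^{(-1)} = \set{x \in \gone : [x,x] = 0} = \set{0}$ by Theorem~\ref{thm:homeomorphism}, so the trivial supermodule $k$ satisfies the hypothesis of Theorem~\ref{theorem:0varietyimpliesfinprojdim} trivially, giving $n \colonequals \projdim_{U(\g)}(k) < \infty$. To pass from this to a bound $\gldim U(\g) \leq n$, I would invoke the standard Hopf algebra argument: given any $U(\g)$-supermodule $M$ and a length-$n$ projective resolution $P_\bullet \rightarrow k$, the complex $P_\bullet \otimes_k M$ with diagonal $U(\g)$-action is a resolution of $k \otimes_k M \cong M$, and each term $P_i \otimes_k M$ is projective because $U(\g) \otimes_k N$ with diagonal action is a free $U(\g)$-supermodule for any $N$ (via the antipode-twist isomorphism $h \otimes n \mapsto \sum (-1)^{\ol{h_{(2)}} \cdot \ol{n}} h_{(1)} \otimes S(h_{(2)}) n$). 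Hence $\projdim_{U(\g)}(M) \leq n$ for every $M$, and $\gldim U(\g) \leq n < \infty$.

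The main obstacle I anticipate is bookkeeping in the base-change step: strictly speaking, homological statements for $U(\g)$ as a superalgebra are cleanest when verified by passing to the ungraded smash product $U(\g) \# k\Z_2$ (Lemma~\ref{lem:smashproductONoether}), where one can cite standard noetherian faithful-flatness results directly and then transfer back via the equivalence between $U(\g)$-supermodules and $\Z_2$-graded $U(\g) \# k\Z_2$-modules recalled in Section~\ref{subsec:homdimenveloping}. The Hopf-algebra freeness trick at the end is routine, though the super signs should be checked carefully.
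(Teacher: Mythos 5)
Your proof is correct and follows essentially the same route as the paper: reduce to $k$ algebraically closed, apply Theorem \ref{theorem:0varietyimpliesfinprojdim} to the trivial supermodule to get $\projdim_{U(\g)}(k) < \infty$, and then use the Hopf structure of $U(\g)$ to pass from $\projdim_{U(\g)}(k)$ to $\gldim(U(\g))$. The paper disposes of the two routine steps by citation --- the base change via the observation that $U(\g)$ is a bimodule direct summand of $U(\g_K)$, which is free over $U(\g)$, together with \cite[Theorem 7.2.8]{McConnell:2001}, and the final step via \cite[Corollary 1.4(c)]{Brown:1997} --- whereas you justify them directly by faithfully flat descent of $\Ext$ and by the tensor-identity argument (routed through $U(\g)\# k\Z_2$ to handle ungraded modules), both of which are valid.
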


\begin{proof}
Evidently, $U(\g_K) \cong U(\g) \otimes_k K$. Let $\set{e_i : i \in I}$ be an $k$-basis for $K$ containing the identity element $1_k = 1_K$. Then $U(\g_K) = \bigoplus_{i \in I} U(\g) \otimes_k e_i$ is a $U(\g)$-bimodule decomposition of $U(\g_K)$. In particular, $U(\g)$ is bimodule direct summand of $U(\g_K)$, and $U(\g_K)$ is free over $U(\g)$. Then $\gldim(U(\g)) \leq \gldim(U(\g_K))$ by \cite[Theorem 7.2.8]{McConnell:2001}, so it suffices to assume that $k = K$. Now the hypothesis implies that $\Chi_\g(k) = \set{0}$, meaning that $\projdim_{U(\g)}(k) < \infty$ by Theorem \ref{theorem:0varietyimpliesfinprojdim}. Then $\gldim(U(\g)) < \infty$ by \cite[Corollary 1.4(c)]{Brown:1997}.
\end{proof}

\appendix

\section{Homological dimensions over Noether algebras} \label{SS:homologicaldimension}

\begin{center}
(by Luchezar L.\ Avramov\footnote{Luchezar L.\ Avramov: Department of Mathematics, University of Nebraska, Lincoln, NE 68588, USA. E-mail address: avramov@math.unl.edu} and Srikanth B.\ Iyengar\footnote{Srikanth B.\ Iyengar: Department of Mathematics, University of Utah, 155 South 1400 East, Salt Lake City, UT, 84112-0090, USA. Email address: iyengar@math.utah.edu})
\end{center}

\subsection{}

When $M$ is a (say, left) module over an associative ring $A$, the condition
	\begin{center}
	 $\Ext^i_A(M,N)=0$ for some \emph{fixed} $i \geq 1$ and for \emph{all} $A$-modules $N$
	\end{center}
is equivalent to an \emph{upper bound} $\projdim_A(M)<i$ on the projective dimension. We identify hypoth\-eses under which two weaker conditions are equivalent; namely
	\begin{center}
	$\Ext^i_A(M,N)=0$ for \emph{all finite} $A$-modules $N$ and for \emph{all sufficiently large} $i$
	\end{center}
is equivalent to the \emph{finiteness} of $\projdim_A(M)$.

\begin{definition}[Noether algebra] \label{def:noetheralgebra}
A \emph{Noether $R$-algebra} $A$ is an associative ring that is finite (that is to say, finitely-generated) as a module over a noetherian ring $R$ lying in the center of $A$. If no central subring is identified, we simply say that $A$ is a Noether algebra.
\end{definition}

Noether algebras are left- and right-noetherian rings. The case of an artinian central subring yields the widely used concept of Artin algebra.

The following theorem is a special case, with a different proof, of a result in \cite{Avramov:2014}.  Only the last condition explicitly invokes the ring $R$: If $\fp$ is a prime ideal of $R$, then localization with respect to the (central) multiplicatively closed set $R\setminus \fp$ turns $A_{\fp}$ into an $R_{\fp}$-algebra and $M_{\fp}$ into an $A_{\fp}$-module.

\begin{theorem} \label{thm:equivprojdimfinite}
Let $A$ be a Noether $R$-algebra and $M$ a finite $A$-module. The following conditions are equivalent:
	\begin{enumerate}
	\item \label{item:projdimfin} $\projdim_A(M) < \infty$.
	\item \label{item:Extvanish} $\Ext_A^i(M,N) = 0$ for all finite $A$-modules $N$ and $i \gg 0$.
	\item \label{item:Extvanishlocal} $\Ext_{A_{\fm}}^i(M_{\fm},L) = 0$ for all maximal ideals $\fm$ 
	of $R$, simple $A_{\fm}$-modules $L$, and $i \gg 0$.
	\end{enumerate}
\end{theorem}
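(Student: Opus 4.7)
The implication $(1) \Rightarrow (2)$ is immediate from the definition of projective dimension: if $\projdim_A(M) \leq n$ then $\Ext_A^i(M,N) = 0$ for every $A$-module $N$ and every $i > n$. For $(2) \Rightarrow (3)$, I would first observe that any simple $A_\fm$-module $L$ is finite-dimensional over the residue field $R/\fm$, since $A_\fm/\rad(A_\fm)$ is a quotient of the artinian $R/\fm$-algebra $A/\fm A = A_\fm/\fm A_\fm$. In particular $L$ is annihilated by a power of $\fm$, hence is a finite $A$-module with $L_\fm = L$. Combined with the standard localization identity $\Ext_A^i(M, L)_\fm \cong \Ext_{A_\fm}^i(M_\fm, L)$---valid because $M$ is finitely presented over the noetherian algebra $A$---this delivers $(3)$.

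The essential content is $(3) \Rightarrow (1)$. I would first fix a maximal ideal $\fm$ of $R$ and prove $\projdim_{A_\fm}(M_\fm) < \infty$. Because $A_\fm$ is finite over the commutative local noetherian ring $R_\fm$, it is semiperfect, so $M_\fm$ admits a minimal projective resolution $P_\bullet \twoheadrightarrow M_\fm$ whose differentials take values in $\rad(A_\fm) \cdot P_\bullet$. Minimality together with the fact that $A_\fm/\rad(A_\fm)$ decomposes as a finite direct sum of simple $A_\fm$-modules yields $\Ext_{A_\fm}^i(M_\fm, A_\fm/\rad(A_\fm)) \cong \Hom_{A_\fm}(P_i, A_\fm/\rad(A_\fm))$. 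Applied to each of the finitely many simple $A_\fm$-modules, hypothesis $(3)$ forces $P_i = 0$ for $i \gg 0$, whence $\projdim_{A_\fm}(M_\fm) < \infty$ for every $\fm$.

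To deduce $\projdim_A(M) < \infty$ globally, I would consider the descending chain of subsets $Z_n = \set{\fp \in \Spec R : \projdim_{A_\fp}(M_\fp) \geq n}$. Starting from a resolution of $M$ by finite projective $A$-modules and letting $K_n$ denote the $n$-th syzygy, one has $\fp \in Z_{n+1}$ if and only if $(K_n)_\fp$ fails to be $A_\fp$-projective. The non-projective locus of a finite module over a Noether algebra is closed in $\Spec R$ (cut out by a Fitting-type ideal in $R$), so each $Z_n$ is closed. The previous paragraph, combined with the preservation of finite projective dimension under further localization from $A_\fm$ to $A_\fp$ for $\fp \subseteq \fm$, gives $\bigcap_n Z_n = \emptyset$; noetherianity of the topological space $\Spec R$ then forces $Z_N = \emptyset$ for some $N$. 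Thus $K_N$ is locally projective at every prime, hence projective, and $\projdim_A(M) \leq N$.

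The main obstacle is the closedness of $Z_n$, i.e., the upper semicontinuity of local projective dimension across $\Spec R$ for a Noether algebra. The commutative analogue is standard via Fitting ideals, but extending it to the noncommutative setting takes some care; this is presumably the technical heart of the argument in \cite{Avramov:2014}.
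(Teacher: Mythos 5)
There is a genuine gap in your proof of \eqref{item:Extvanishlocal}$\Rightarrow$\eqref{item:projdimfin}: the claim that $A_\fm$ is semiperfect because it is module-finite over the commutative noetherian local ring $R_\fm$. That is false in general. A module-finite algebra over a noetherian local ring is semilocal, but semiperfectness additionally requires idempotents to lift modulo the Jacobson radical, and this can fail when the base is not complete. For instance, $A=\Z_{(5)}[i]$ is finite over the local ring $R=\Z_{(5)}$, and $A/\rad(A)\cong A/5A\cong \F_5\times \F_5$, yet $A$ is a domain and so has no nontrivial idempotents; hence $A$ is not semiperfect, its simple modules have no projective covers, and finite $A$-modules need not admit minimal projective resolutions. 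This is exactly why the paper's Proposition \ref{prop:semilocal} first passes to the $\fm$-adic completion: it uses faithful flatness of $R\rightarrow \wh{R}$ together with Lemma \ref{lemma:ExtAisRnoetherian} to transfer both the Ext-vanishing data and the projective dimension, checks that the simple modules are unchanged, and only then invokes semiperfectness of $\wh{A}$ (module-finite over a \emph{complete} local ring, \cite[Example 23.3]{Lam:2001}) to run your minimal-resolution argument. Your local step is repairable by inserting this completion detour, but as written it does not go through.

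The rest of your argument is sound and in places diverges from the paper. Your \eqref{item:Extvanish}$\Rightarrow$\eqref{item:Extvanishlocal} is essentially the paper's (you realize $L$ as the localization of a finite $A$-module; the paper takes $Al$ for $0\neq l\in L$). For the passage from all localizations to $A$ itself, the paper simply cites the Bass--Murthy lemma (Remark \ref{rem:equivprojdimfinite}), whereas you re-derive a version of it by noetherian induction on the loci $Z_n$. That route is viable, and the closedness of $Z_{n+1}$ that you flag as the main obstacle is in fact straightforward: with $0\rightarrow K_{n+1}\rightarrow P_n\rightarrow K_n\rightarrow 0$ and $P_n$ finite free, $(K_n)_\fp$ is $A_\fp$-projective if and only if the localized sequence splits, if and only if $\Ext_A^1(K_n,K_{n+1})_\fp=0$ (using Lemma \ref{lemma:ExtAisRnoetherian}), so $Z_{n+1}$ is the support of the finite $R$-module $\Ext_A^1(K_n,K_{n+1})$ and is closed. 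So the genuinely missing ingredient is the completion step, not the semicontinuity.
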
 

The implication \eqref{item:projdimfin}$\Rightarrow$\eqref{item:Extvanish} is clear, and \eqref{item:Extvanish}$\Rightarrow$\eqref{item:Extvanishlocal} follows from basic properties of localization. To close the loop, we draw on a classical \emph{local} characterization of finite projective dimension: 

\begin{remark} \label{rem:equivprojdimfinite}
For $R$, $A$, and $M$ as in the theorem, the following conditions are equivalent:
	\begin{enumerate}
	\item \label{remitem:projdimfin} $\projdim_A(M) < \infty$.
	\item \label{item:localmaxprojdimfin} $\projdim_{A_\fm}(M_{\fm}) < \infty$ for all maximal ideals $\fm$ of $R$.
	\end{enumerate}

Once again, the implication \eqref{remitem:projdimfin}$\Rightarrow$\eqref{item:localmaxprojdimfin} reflects the exactness of the localization functor. The converse is due to Bass and Murthy; see \cite[Lemma 4.5]{Bass:1967} or \cite[Corollary III.6.6]{Bass:1968}.
\end{remark} 

In the proofs we use the canonical action of $R$ on $\Ext$: Multiplication by $r \in R$ on $\Ext_{A}^i(M,N)$ is the map induced by the $A$-linear (because $R$ is central) endomorphism of $N$, given by $n \mapsto rn$.

For convenience, we include a variation of a standard result on flat base change.

\begin{lemma} \label{lemma:ExtAisRnoetherian}
Let $A$ be a Noether $R$-algebra, $R \rightarrow R'$ a homomorphism of commutative rings, and $(-)'$ the functor $R' \otimes_R -$.  If $R'$ is flat as an $R$-module and if $M$ is a finite $A$-module, then for each $A$-module $N$ there are natural isomorphisms
	\[
	R'\otimes_R \Ext_A^i(M,N) \cong \Ext_{A'}^i(M', N')  \quad\text{for } i \in \Z.
	\]
\end{lemma}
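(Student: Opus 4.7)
The plan is a standard flat base change argument, adapted to the non-commutative setting provided by a Noether $R$-algebra. First, since $A$ is left-noetherian and $M$ is a finite $A$-module, one can choose a resolution $P_\bullet \to M$ by finitely-generated projective (in fact finitely-generated free) $A$-modules. Applying the exact functor $(-)' = R' \otimes_R -$ (exact because $R'$ is $R$-flat) yields a complex $P'_\bullet \to M'$ of $A'$-modules, and each $P'_n = R' \otimes_R P_n$ is a finitely-generated projective $A'$-module, since tensor-hom adjunction shows that tensoring a finitely-generated projective $A$-module with $R'$ over $R$ produces a finitely-generated projective $A'$-module. By flatness of $R'$ over $R$, the complex $P'_\bullet \to M'$ is again exact, so it is a finitely-generated projective resolution of $M'$ over $A'$.

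Next, I would establish the natural isomorphism of cochain complexes
\[
R' \otimes_R \Hom_A(P_\bullet, N) \;\xrightarrow{\;\cong\;}\; \Hom_{A'}(P'_\bullet, N').
\]
For each degree $n$, this reduces to showing that the canonical map
\[
R' \otimes_R \Hom_A(P_n, N) \longrightarrow \Hom_{A'}(R' \otimes_R P_n, R' \otimes_R N)
\]
is an isomorphism. Because $P_n$ is a finitely-generated projective $A$-module, this reduces (by an additivity and retract argument) to the case $P_n = A$, where both sides are identified with $N' = R' \otimes_R N$ in an obvious way. This is the one spot that genuinely uses the finiteness of $M$ (via the finite generation of each $P_n$); without it, the canonical map need not be an isomorphism.

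Finally, taking cohomology on both sides and once more invoking flatness of $R'$ to commute cohomology past $R' \otimes_R -$, one obtains the natural isomorphism
\[
R' \otimes_R \Ext_A^i(M, N) \;\cong\; H^i\bigl(R' \otimes_R \Hom_A(P_\bullet, N)\bigr) \;\cong\; H^i\bigl(\Hom_{A'}(P'_\bullet, N')\bigr) \;\cong\; \Ext_{A'}^i(M', N'),
\]
valid for all $i \in \Z$, and natural in $N$. The main obstacle in carrying this out carefully is the degreewise $\Hom$-tensor comparison; the rest is bookkeeping with flatness.
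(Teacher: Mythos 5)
Your proposal is correct and follows essentially the same route as the paper's proof: take a resolution of $M$ by finitely-generated free (or projective) $A$-modules, check the degreewise canonical map $\Hom_A(P_n,N)' \to \Hom_{A'}(P_n',N')$ is an isomorphism, and use flatness of $R'$ both to see that $P_\bullet'$ resolves $M'$ and to commute $R'\otimes_R -$ past cohomology. The only cosmetic difference is that the paper works directly with free modules $P_i = A^{\oplus b_i}$, so the retract step is unnecessary there.
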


\begin{proof}
Let $P_\bullet$ be a projective resolution of $M$ with $P_i = A^{\oplus b_i}$, $b_i \in \N$, for each $i$. The map $h^\bullet: \Hom_A(P_\bullet,N) \rightarrow \Hom_{A'}(P'_\bullet,N')$, which takes a homomorphism of complexes $P_\bullet \rightarrow N$ to the induced map $P_\bullet' \rightarrow N'$, is a morphism of complexes over $R$. Its target is a complex of $R'$-modules, so $h^\bullet$ factors uniquely through a morphism $\wt{h}^\bullet : \Hom_A(P_\bullet,N)' \rightarrow \Hom_{A'}(P'_\bullet,N')$ of such complexes, and $\wt{h}^i$ is the canonical isomorphism $\Hom_A(A^{\oplus b_i},N)' \cong \Hom_{A'}((A^{ \oplus b_i})',N')$. Since $P'_\bullet$ is a projective resolution of $M'$ over $A'$, the maps $\opH^i(\wt{h})$ yield the desired isomorphisms.
\end{proof}

Recall that an associative ring is said to be \emph{semilocal} if the residue ring modulo its Jacobson radical is semisimple; in the commutative case, this defines the rings with finitely many maximal ideals; see \cite[Proposition 20.2]{Lam:2001}. Semilocal rings have finitely many simple modules.  

We say that a Noether $R$-algebra $A$ is \emph{semilocal} if the ring $R$ has this property; it is well-known (for reasons recalled in the next proof) that such algebras are semilocal rings.

\begin{proposition} \label{prop:semilocal}
Let $A$ be a semilocal Noether $R$-algebra, and set
	\[
	p_{A}(M) \colonequals \max_{1\le j\le r} \{ i \in \N \mid \Ext_{A}^i(M,L_j) \neq 0 \},
	\]
where $L_1,\dots,L_r$ are the simple $A$-modules. Then the following equality holds:
	\[
	\projdim_A(M) = p_{A}(M).
	\]
\end{proposition}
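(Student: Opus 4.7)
The inequality $p_A(M)\le\projdim_A(M)$ is immediate, so the task is to establish the reverse inequality assuming $n\colonequals p_A(M)$ is finite. The plan is to reduce progressively to the case of a complete local base ring $R$, where $A$ becomes semiperfect, and then to extract the projective dimension from a minimal projective resolution.

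\emph{Reduction to $R$ local.} For $M$ finitely presented over noetherian $A$, the condition $\projdim_A(M)\le n$ is equivalent to the vanishing of $\Ext^{n+1}_A(M,N)$ for every finite $A$-module $N$, since $\Omega^n M$ is projective iff its defining extension $0\to K\to A^{b}\to \Omega^n M\to 0$ splits (and $K$ is finite). By Lemma~\ref{lemma:ExtAisRnoetherian} this vanishing may be tested after localization at each maximal ideal $\fm$ of $R$, and every finite $A_\fm$-module arises as such a localization. The simples of $A_\fm$ are precisely $(L_j)_\fm$ for those $L_j$ with $R$-annihilator $\fm$ (the annihilator of a simple module over a Noether algebra is always a maximal ideal of $R$), so the hypothesis $p_A(M)\le n$ descends to $(A_\fm,M_\fm)$, and it suffices to assume $R$ is local.

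\emph{Reduction to $R$ complete local.} Let $\hat R$ denote the $\fm$-adic completion of $R$, and set $\hat A=A\otimes_R\hat R$ and $\hat M=M\otimes_R\hat R$. Every simple $A$-module is killed by $\fm$ and therefore unchanged by $-\otimes_R\hat R$, so the simples of $A$ and $\hat A$ coincide. Lemma~\ref{lemma:ExtAisRnoetherian} combined with the faithful flatness of $R\to\hat R$ yields $p_{\hat A}(\hat M)\le n$, and faithfully flat descent of projectivity for finitely presented modules gives $\projdim_A M=\projdim_{\hat A}\hat M$. Thus it suffices to treat the case where $R$ is complete local.

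\emph{The complete local case.} With $R$ complete local, $A$ is $\fm A$-adically complete, $A/\fm A$ is finite-dimensional (hence artinian) over the residue field $R/\fm$, and idempotents lift modulo $\fm A\subseteq J(A)$. Hence $A$ is semiperfect, so $M$ admits a minimal projective resolution $P_\bullet\to M$ with finitely generated $P_i$, whose differentials $d_i\colon P_i\to P_{i-1}$ land in $J(A)P_{i-1}$. Consequently the induced differentials in $\Hom_A(P_\bullet,L_j)$ vanish, giving $\Ext^i_A(M,L_j)=\Hom_A(P_i,L_j)$; by Nakayama's lemma, $P_i=0$ precisely when $\Hom_A(P_i,L_j)=0$ for every simple $L_j$. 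The hypothesis forces $P_i=0$ for each $i>n$, whence $\projdim_A M\le n$.

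\emph{Main obstacle.} The essential technical point is the completion step: one must verify both that the hypothesis transfers to $(\hat A,\hat M)$ and that $\projdim$ is preserved under the faithfully flat extension $A\to\hat A$. The former rests on simples being $\fm$-torsion together with the flat base change formula for Ext; the latter invokes fpqc descent of projectivity for finitely presented modules. Without this passage to the complete case, $A$ need not be semiperfect and the minimal-resolution argument of the final step is unavailable.
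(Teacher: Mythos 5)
Your proof is correct and follows essentially the same route as the paper's: use the flat base-change isomorphism for $\Ext$ and faithfully flat completion to reduce to the case where $A$ is semiperfect, then read off the projective dimension from a minimal projective resolution via $\Ext_A^i(M,L_j)\cong\Hom_A(P_i,L_j)$ and Nakayama. The only structural difference is that you first localize at each maximal ideal of $R$ (which requires a Bass--Murthy-type local-to-global step and the identification of the simple $A_{\fm}$-modules among the $L_j$), whereas the paper skips localization and completes the semilocal ring $R$ at its Jacobson radical in one step; both variants are valid.
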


\begin{proof}
Let $\fm$ denote the Jacobson ideal of $R$, and set $k \colonequals R/\fm$ and $\ol{A} \colonequals k\otimes_R A$. The Jacobson radical $J$ of the ring $A$ satisfies $J \supseteq \fm A \supseteq J^n$ for some $n \geq 1$, because $A$ is finite as an $R$-module; see \cite[Proposition 20.6]{Lam:2001}. It follows that $A$ is a semilocal ring, with the same simple modules as $\ol{A}$. 

Let $\wh{R}$ be the $\fm$-adic completion of $R$; recall that $\wh{R}$ is a noetherian semilocal ring with Jacobson radical $\fm \wh{R}$, and the completion map $R \rightarrow \wh{R}$ is a faithfully flat ring homomorphism that induces an isomorphism of $R/\fm$ and $\wh{R}/\fm \wh{R}$. Set $\wh{A} \colonequals \wh{R} \otimes_R A$, and $\wh{N} \colonequals \wh{R} \otimes_R N$ for each $R$-module $N$. The induced ring homomorphism $\wh{R} \rightarrow \wh{A}$ is injective; it turns $\wh{A}$ into a Noether $\wh{R}$-algebra and $\wh{M}$ into a finite $\wh{A}$-module. Furthermore, the induced homomorphism of $k$-algebras $\ol{A} =k \otimes_{R} A \rightarrow k \otimes_{\wh{R}}\wh{A}$ is bijective and so are the induced equivariant maps $L_i= k \otimes_{R} L_i \rightarrow k\otimes_{\wh{R}} \wh{L}_i$. It follows that the ring $\wh{A}$ is semilocal, with simple modules $\wh{L}_1,\ldots,\wh{L}_r$. Lemma \ref{lemma:ExtAisRnoetherian} provides isomorphism
	\[
	\wh{R} \otimes_R \Ext_{A}^i(M,N) \cong \Ext_{\wh{R} \otimes_R A}^i(\wh{R} \otimes_R M, \wh{R} \otimes_R N)  \quad\text{for } i \in \Z.
	\]
As the functor $\wh{R} \otimes_R -$ is faithful, and $\projdim_A(M)$ equals $\sup_N \{ i \in \N \mid \Ext_{A}^i(M,N) \neq 0 \}$ when $N$ ranges over finitely generated $A$-modules (see \cite[Proposition VI.2.5]{Cartan:1999}), the isomorphisms yield
	\[
	\projdim_{\wh{A}}(\wh{M}) \geq \projdim_A(M) \geq p_{A}(M) = p_{\wh{A}}(\wh{M}).
	\]
The upshot of the preceding discussion is that it suffices to prove that $p_{A}(M)$ equals $\projdim_A(M)$ under the additional hypothesis that $R$ is $\fm$-adically complete. The benefit is that then the ring $A$ is semiperfect; see \cite[Example 23.3]{Lam:2001}. As it is also noetherian, the finite module $M$ has a projective resolution $P_\bullet$ in which every module $P_i$ is finite projective and every differential $\partial_i \colon P_i \rightarrow P_{i-1}$ satisfies $\partial_i(P_i) \subseteq JP_{i-1}$; see \cite[Proposition 24.12]{Lam:2001}. Such a \emph{minimal resolution} yields isomorphisms
	\[
	\Hom_A(P_i, L_j) \cong\Ext_{A}^i(M,L_j) \quad \text{for } 1 \leq j \leq r \text{ and } i \in \Z.
	\]
As $\Hom_A(P_i, L_j)=0$ for $1 \leq j \leq r$ implies $P_i = 0$, we get $\projdim_A(M) = p_{A}(M)$, as desired.
\end{proof}

\begin{proof}[Proof of Theorem \emph{\ref{thm:equivprojdimfinite}}]
\eqref{item:Extvanish}$\Rightarrow$\eqref{item:Extvanishlocal}.
Let $\fm$ be a maximal ideal of $R$ and $L$ a simple $A_{\fm}$-module. For any nonzero $l \in L$ we have $L=A_{\fm}l\cong (Al)_{\fm}$. Since the functors $(-)_{\fm}$ and $R_{\fm}\otimes_R-$ are isomorphic, and $R_\fm$ is flat as an $R$-module, from Lemma \ref{lemma:ExtAisRnoetherian} and the hypothesis we get
	\[
	\Ext_{A_{\fm}}^i(M_{\fm}, L) \cong \Ext_{A_{\fm}}^i(M_{\fm}, (Al)_\fm) \cong \Ext_A^i(M,Al)_{\fm} = 0 \quad \text{for } i \gg 0.
	\]

\eqref{item:Extvanishlocal}$\Rightarrow$\eqref{item:projdimfin}.
Due to the hypothesis, Proposition \ref{prop:semilocal} shows that $\projdim_{A_\fm}(M_{\fm})$ is finite for every maximal ideal $\fm$ of $R$, and by Remark \ref{rem:equivprojdimfinite} this means that $\projdim_A(M)$ is finite.
\end{proof}

\makeatletter
\renewcommand*{\@biblabel}[1]{\hfill#1.}
\makeatother

\bibliographystyle{eprintamsplain}
\bibliography{support-varieties-modular-lie-superalegbras}

\providecommand{\bysame}{\leavevmode\hbox to3em{\hrulefill}\thinspace}
\providecommand{\MR}{\relax\ifhmode\unskip\space\fi MR }
\providecommand{\MRhref}[2]{%
  \href{http://www.ams.org/mathscinet-getitem?mr=#1}{#2}
}
\providecommand{\href}[2]{#2}
\begin{thebibliography}{10}

\bibitem{Avramov:2007}
L.~L. Avramov and S.~B. Iyengar,
  \href{http://projecteuclid.org/euclid.ijm/1258735320} {\emph{Constructing
  modules with prescribed cohomological support}}, Illinois J. Math.
  \textbf{51} (2007), no.~1, 1--20.

\bibitem{Avramov:2014}
\bysame, \emph{Cohomologically noetherian rings}, preprint, 2014.

\bibitem{Bahturin:1992}
Y.~A. Bahturin, A.~A. Mikhalev, V.~M. Petrogradsky, and M.~V. Zaicev,
  \href{https://doi.org/10.1515/9783110851205} {\emph{Infinite-dimensional
  {L}ie superalgebras}}, De Gruyter Expositions in Mathematics, vol.~7, Walter
  de Gruyter \& Co., Berlin, 1992.

\bibitem{Bass:1968}
H.~Bass, \emph{Algebraic {$K$}-theory}, W. A. Benjamin, Inc., New
  York-Amsterdam, 1968.

\bibitem{Bass:1967}
H.~Bass and M.~P. Murthy, \href{https://doi.org/10.2307/1970360}
  {\emph{Grothendieck groups and {P}icard groups of abelian group rings}}, Ann.
  of Math. (2) \textbf{86} (1967), 16--73.

\bibitem{Boe:2009}
B.~D. Boe, J.~R. Kujawa, and D.~K. Nakano,
  \href{http://dx.doi.org/10.1112/plms/pdn019} {\emph{Cohomology and support
  varieties for {L}ie superalgebras. {II}}}, Proc. Lond. Math. Soc. (3)
  \textbf{98} (2009), no.~1, 19--44.

\bibitem{Boe:2010}
\bysame, \href{http://dx.doi.org/10.1090/S0002-9947-2010-05096-2}
  {\emph{Cohomology and support varieties for {L}ie superalgebras}}, Trans.
  Amer. Math. Soc. \textbf{362} (2010), no.~12, 6551--6590.

\bibitem{Boe:2012}
\bysame, \href{http://dx.doi.org/10.1112/S0010437X12000231} {\emph{Complexity
  for modules over the classical {L}ie superalgebra {$\mathfrak{gl}(m|n)$}}},
  Compos. Math. \textbf{148} (2012), no.~5, 1561--1592.

\bibitem{Boe:2017}
B.~D. Boe, J.~R. Kujawa, and D.~K. Nakano,
  \href{http://dx.doi.org/10.1016/j.aim.2017.04.022} {\emph{Tensor triangular
  geometry for classical {L}ie superalgebras}}, Adv. Math. \textbf{314} (2017),
  228--277.

\bibitem{Bo-gvad:1984}
R.~B{\o}gvad, \emph{Some elementary results on the cohomology of graded {L}ie
  algebras}, Algebraic homotopy and local algebra ({L}uminy, 1982),
  Ast\'{e}risque, vol. 113, Soc. Math. France, Paris, 1984, pp.~156--166.

\bibitem{Brown:1997}
K.~A. Brown and K.~R. Goodearl, \href{https://doi.org/10.1006/jabr.1997.7109}
  {\emph{Homological aspects of {N}oetherian {PI} {H}opf algebras of
  irreducible modules and maximal dimension}}, J. Algebra \textbf{198} (1997),
  no.~1, 240--265.

\bibitem{Carlson:1983}
J.~F. Carlson, \href{http://dx.doi.org/10.1016/0021-8693(83)90121-7} {\emph{The
  varieties and the cohomology ring of a module}}, J. Algebra \textbf{85}
  (1983), no.~1, 104--143.

\bibitem{Cartan:1999}
H.~Cartan and S.~Eilenberg, \emph{Homological algebra}, Princeton Landmarks in
  Mathematics, Princeton University Press, Princeton, NJ, 1999, With an
  appendix by David A. Buchsbaum, Reprint of the 1956 original.

\bibitem{Drupieski:2013c}
C.~M. Drupieski, \href{http://dx.doi.org/10.1090/S1088-4165-2013-00440-5}
  {\emph{Cohomological finite generation for restricted {L}ie superalgebras and
  finite supergroup schemes}}, Represent. Theory \textbf{17} (2013), 469--507.

\bibitem{Drupieski:2018}
C.~M. Drupieski and J.~R. Kujawa, \emph{Support schemes for infinitesimal
  unipotent supergroups}, preprint, 2018, \href
  {http://arxiv.org/abs/1811.04840} {\path{arXiv:1811.04840}}.

\bibitem{Drupieski:2019b}
C.~M. Drupieski and J.~R. Kujawa,
  \href{https://doi.org/10.1016/j.aim.2019.03.014} {\emph{Graded analogues of
  one-parameter subgroups and applications to the cohomology of
  {$GL_{m|n(r)}$}}}, Adv. Math. \textbf{348} (2019), 277--352.

\bibitem{Drupieski:2019a}
\bysame, \href{https://doi.org/10.1016/j.jalgebra.2018.12.033} {\emph{On
  support varieties for {L}ie superalgebras and finite supergroup schemes}}, J.
  Algebra \textbf{525} (2019), 64--110.

\bibitem{Duflo:2005}
M.~Duflo and V.~Serganova, \emph{On associated variety for {L}ie
  superalgebras}, preprint, 2005, \href {http://arxiv.org/abs/0507198}
  {\path{arXiv:0507198}}.

\bibitem{Evens:1991}
L.~Evens, \emph{The cohomology of groups}, Oxford Mathematical Monographs, The
  Clarendon Press Oxford University Press, New York, 1991, Oxford Science
  Publications.

\bibitem{Feldvoss:2010}
J.~Feldvoss and S.~Witherspoon, \href{http://dx.doi.org/10.1093/imrn/rnp189}
  {\emph{Support varieties and representation type of small quantum groups}},
  Int. Math. Res. Not. (2010), no.~7, 1346--1362.

\bibitem{Friedlander:1986b}
E.~M. Friedlander and B.~J. Parshall,
  \href{http://dx.doi.org/10.1007/BF01389268} {\emph{Support varieties for
  restricted {L}ie algebras}}, Invent. Math. \textbf{86} (1986), no.~3,
  553--562.

\bibitem{Friedlander:1997}
E.~M. Friedlander and A.~Suslin, \href{http://dx.doi.org/10.1007/s002220050119}
  {\emph{Cohomology of finite group schemes over a field}}, Invent. Math.
  \textbf{127} (1997), no.~2, 209--270.

\bibitem{Lam:2001}
T.~Y. Lam, \href{https://doi.org/10.1007/978-1-4419-8616-0} {\emph{A first
  course in noncommutative rings}}, second ed., Graduate Texts in Mathematics,
  vol. 131, Springer-Verlag, New York, 2001.

\bibitem{McCleary:2001}
J.~McCleary, \emph{A user's guide to spectral sequences}, second ed., Cambridge
  Studies in Advanced Mathematics, vol.~58, Cambridge University Press,
  Cambridge, 2001.

\bibitem{McConnell:2001}
J.~C. McConnell and J.~C. Robson, \href{https://doi.org/10.1090/gsm/030}
  {\emph{Noncommutative {N}oetherian rings}}, revised ed., Graduate Studies in
  Mathematics, vol.~30, American Mathematical Society, Providence, RI, 2001,
  With the cooperation of L. W. Small.

\bibitem{Montgomery:1993}
S.~Montgomery, \emph{Hopf algebras and their actions on rings}, CBMS Regional
  Conference Series in Mathematics, vol.~82, Published for the Conference Board
  of the Mathematical Sciences, Washington, DC, 1993.

\bibitem{Musson:2012}
I.~M. Musson, \href{https://doi.org/10.1090/gsm/131} {\emph{Lie superalgebras
  and enveloping algebras}}, Graduate Studies in Mathematics, vol. 131,
  American Mathematical Society, Providence, RI, 2012.

\bibitem{Quillen:1971}
D.~Quillen, \emph{The spectrum of an equivariant cohomology ring. {I}, {II}},
  Ann. of Math. (2) \textbf{94} (1971), 549--572; ibid. (2) 94 (1971),
  573--602.

\bibitem{Strade:1988}
H.~Strade and R.~Farnsteiner, \emph{Modular {L}ie algebras and their
  representations}, Monographs and Textbooks in Pure and Applied Mathematics,
  vol. 116, Marcel Dekker, Inc., New York, 1988.

\bibitem{Wu:2003}
Q.-S. Wu and J.~J. Zhang, \href{https://doi.org/10.1090/S0002-9947-02-03106-9}
  {\emph{Noetherian {PI} {H}opf algebras are {G}orenstein}}, Trans. Amer. Math.
  Soc. \textbf{355} (2003), no.~3, 1043--1066.

\end{thebibliography}

\end{document}